\newcommand{\bC}{\mathbb{C}}
\newcommand{\bP}{\mathbb{P}}
\newcommand{\bN}{\mathbb{N}}
\newcommand{\bZ}{\mathbb{Z}}
\newcommand{\bD}{\mathbb{D}}
\newcommand{\bR}{\mathbb{R}}
\newcommand{\bQ}{\mathbb{Q}}
\newcommand{\rd}{\mathrm{d}}
\newcommand{\Fix}{\operatorname{Fix}}
\newcommand{\Per}{\operatorname{Per}}
\newcommand{\diam}{\operatorname{diam}}
\newcommand{\Res}{\operatorname{Res}}
\newcommand{\cF}{\mathcal{F}}
\numberwithin{equation}{section}
\theoremstyle{plain}
\newtheorem{theorem}{Theorem}[section]
\newtheorem{lemma}{Lemma}[section]
\newtheorem{claim}{Claim}
\newtheorem{mainth}{Theorem}
\theoremstyle{definition}
\newtheorem{notation}{Notation}[section]
\newtheorem*{acknowledgement}{Acknowledgement}
\theoremstyle{remark}
\newtheorem{remark}{Remark}[section]
\begin{document}  
  
\title[Nevanlinna theory in the unicritical polynomials family]{Nevanlinna theory and 
Value distribution in the unicritical polynomials family}

\author[Y\^usuke Okuyama]{Y\^usuke Okuyama}
\address{
Division of Mathematics,
Kyoto Institute of Technology,
Sakyo-ku, Kyoto 606-8585 Japan.}
\email{okuyama@kit.ac.jp}

%\dedicatory{Dedicated to Professor Masahiko Taniguchi 
%on his sixty fifth birthday}

%\thanks{Partially supported by JSPS Grant-in-Aid for Young Scientists (B), 24740087.}

\date{\today}

\begin{abstract}
In the space $\bC$ of the parameters $\lambda$ of
the unicritical polynomials family
$f(\lambda,z)=f_\lambda(z)=z^d+\lambda$ of degree $d>1$,
we establish a quantitative equidistribution result towards the bifurcation current 
(indeed measure) $T_f$ of $f$
as $n\to\infty$ on the averaged distributions of all parameters 
$\lambda$ such that
$f_\lambda$ has a superattracting periodic point of period $n$ in $\bC$,
with a concrete error estimate for $C^2$-test functions on $\bP^1$. 
In the proof, not only complex dynamics but also  
a standard argument from the Nevanlinna theory play key roles.
\end{abstract}

\subjclass[2010]{Primary 37F45; Secondary 30D35}
\keywords{unicritical polynomials family, superattracting periodic point, 
equidistribution, Nevanlinna theory}

\maketitle

\section{Introduction}\label{sec:intro}

Let $f:\bC\times\bP^1\to\bP^1$ be the {\itshape $($monic and centered$)$
unicritical polynomials family}
\begin{gather}
 f(\lambda,z)=f_{\lambda}(z):=z^d+\lambda
\quad\text{for every }(\lambda,z)\in\bC\times\bP^1\label{eq:unicritical}
\end{gather}
of degree $d>1$. Let $c_0\equiv 0$ on $\bC$, which is
a {\itshape marked critical point} 
of the family $f$ in that for every $\lambda\in\bC$,
$c_0(\lambda)$ is a critical point of $f_\lambda(z)\in\bC[z]$.
For every $n\in\bN\cup\{0\}$, let us define
the monic polynomial
\begin{gather*}
 F_n(\lambda):=f_\lambda^n(c_0(\lambda))\equiv f_\lambda^n(0)\in\bZ[\lambda]
\end{gather*}
of degree $d^{n-1}$. 
Any zero of $F_n$ is simple (Douady--Hubbard \cite[Expos\'e XIX]{DouadyHubbard85}; see also \cite[Theorem 10.3]{MS94} for a simple proof).
The study of the asymptotic behavior as $n\to\infty$ of the set of all zeros of 
$F_n$,
which is the set of all parameters $\lambda\in\bC$ such that $f_\lambda$ has
a superattracting periodic point of (not necessarily exact) period $n$ in $\bC$,
was initiated by Levin \cite{Levin90}, and has been developed by
Bassanelli--Berteloot \cite{BassanelliBerteloot09,BassanelliBerteloot11} and Buff--Gauthier \cite{BG14}
subsequently. 

Our aim is,
from both complex dynamics and the {\itshape Nevanlinna theory},
to contribute to the quantitative study of the asymptotic behavior of 
zeros of $F_n$ 
as $n\to\infty$, partly sharpening Gauthier--Vigny \cite{GV15}. 

\begin{notation}
 Let $\mu:\bN\mapsto\{-1,0,1\}$ be the M\"obius function 
 from arithmetic (cf.\ \cite[\S 2]{Apostol}). Let $\log^+t:=\log\max\{1,t\}$
 on $\bR$.
 Let $\omega$ be the Fubini-Study area element on $\bP^1$ normalized as
 $\omega(\bP^1)=1$, let $[z,w]$ be the chordal metric on $\bP^1$
 normalized as $[\cdot,\infty]=1/\sqrt{1+|\cdot|^2}$ on $\bP^1$
 (following the notation in Nevanlinna's and Tsuji's books \cite{Nevan70,Tsuji59}), 
 and
 let $\delta_x$ be the Dirac measure on $\bP^1$ 
 at each $x\in\bP^1$. The Laplacian $\rd\rd^c$ on $\bP^1$ is normalized as
 $\rd\rd^c(-\log[\cdot,\infty])=\omega-\delta_\infty$ on $\bP^1$.
 Set $\bD(x,r):=\{y\in\bC:|x-y|<r\}$ for every $x\in\bC$ and every $r>0$, 
 $\bD(r):=\bD(0,r)$ for every $r>0$, and $\bD:=\bD(1)$.
\end{notation}

\subsection{Main result}
Let $g_{I_{c_0}}$ be the Green function with pole $\infty$
on the {\itshape escaping} locus
$I_{c_0}:=\{\lambda\in\bC:\limsup_{n\to\infty}|F_n(\lambda)|=\infty\}$ 
of the marked critical point $c_0$ of $f$; $I_{c_0}$ is a punctured open 
and connected neighborhood of $\infty$ in $\bP^1$, and
$\partial I_{c_0}$ and $\bC\setminus I_{c_0}$ respectively coincide 
with the {\itshape $J$-unstability} or {\itshape bifurcation} 
locus $B_f$ and the {\itshape connectedness locus}
$M_f$ of $f$. The function $g_{I_{c_0}}$ 
extends to $\bC$ continuously by setting $g_{I_{c_0}}\equiv 0$ on $M_f$, and
$\mu_{B_f}:=\rd\rd^c g_{I_{c_0}}+\delta_\infty$ on $\bP^1$ coincides with
the harmonic measure on $B_f$ with pole $\infty$. 
The {\itshape measure} $(d-1)d^{-1}\mu_{B_f}$ on $\bP^1$
coincides with the {\itshape bifurcation current} 
(indeed measure) $T_f$ of $f$ on $\bP^1$ 
(see Subsection \ref{sec:DH}).
By a refinement of Przytycki's argument on the recurrence of critical orbits
\cite[Proof of Lemma 2]{Przytycki93} and
Buff's upper estimate of the moduli of the derivatives of polynomials
\cite[the proof of Theorem 3]{Buff03},
we will establish the following $L^1(\omega)$ estimate
\begin{gather}
\int_{\bP^1}\bigl|\log|F_n|-d^{n-1}\cdot g_{I_{c_0}}\bigr|\omega
\le \frac{2\log d}{d-1}n+O(1)
\label{eq:lower}
\end{gather}
as $n\to\infty$, with the {\em concrete} coefficient 
$(2\log d)/(d-1)$ of $n$ in the right hand side; a question on 
the best possibility of this estimate \eqref{eq:lower} seems also interesting.
As seen in the proof of \eqref{eq:lower} (in Section \ref{sec:Selberg}),
this may be regarded as a counterpart of H.\ Selberg's theorem
\cite[p.\ 313]{Selberg44}
from the Nevanlinna theory. 

Our principal result is a deduction from \eqref{eq:lower}
of the following {\itshape quantitative} equidistribution 
of the sequence $(F_n^*\delta_0/d^n)$ of the {\em averaged}
distribution of the superattracting parameters of period $n$
towards $(d-1)^{-1}T_f=d^{-1}\mu_{B_f}$ as $n\to\infty$.

\begin{mainth}\label{th:linear}
 Let $f:\bC\times\bP^1\to\bP^1$ be the unicritical $($monic and centered$)$
 polynomials family of degree $d>1$ defined as in \eqref{eq:unicritical}. Then
for every $\phi\in C^2(\bP^1)$, 
\begin{multline}
 \left|\int_{\bP^1}\phi\rd\left((d-1)\cdot F_n^*\delta_0-d^n\cdot T_f\right)\right|\\
 \le\biggl(\sup_{\bP^1}\biggl|\frac{\rd\rd^c\phi}{\omega}\biggr|\biggr)
 \cdot\bigl((2\log d)n+O(1)\bigr)\label{eq:superattcurrent}
\end{multline}
as $n\to\infty$, where the implicit constant in $O(1)$ is independent of $\phi$
and the Radon-Nikodim derivative $(\rd\rd^c\phi)/\omega$ on $\bP^1$
is bounded on $\bP^1$.
\end{mainth}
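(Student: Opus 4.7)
The plan is to derive \eqref{eq:superattcurrent} from the $L^1$-estimate \eqref{eq:lower} by a standard potential-theoretic duality argument on $\bP^1$. Using $T_f=(d-1)d^{-1}\mu_{B_f}$, the signed measure of interest first rewrites as
\begin{equation*}
(d-1)\cdot F_n^*\delta_0-d^n\cdot T_f=(d-1)\bigl(F_n^*\delta_0-d^{n-1}\mu_{B_f}\bigr).
\end{equation*}
I would then apply the Poincar\'e--Lelong formula to the monic polynomial $F_n\in\bZ[\lambda]$ of degree $d^{n-1}$ (whose zeros are all simple) to get $\rd\rd^c\log|F_n|=F_n^*\delta_0-d^{n-1}\delta_\infty$ on $\bP^1$, and combine it with the relation $\rd\rd^c g_{I_{c_0}}=\mu_{B_f}-\delta_\infty$ recalled in the introduction. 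After scaling the latter by $d^{n-1}$, the two $\delta_\infty$ contributions cancel, producing the key identity
\begin{equation*}
\rd\rd^c u_n=F_n^*\delta_0-d^{n-1}\mu_{B_f},
\qquad u_n:=\log|F_n|-d^{n-1}\cdot g_{I_{c_0}},
\end{equation*}
as a signed measure of total mass zero on $\bP^1$.

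Because $F_n$ is monic and $g_{I_{c_0}}(\lambda)=\log|\lambda|+O(1)$ as $\lambda\to\infty$, both summands of $u_n$ contribute the same leading term $d^{n-1}\log|\lambda|$ near $\infty$ and cancel, so $u_n$ extends boundedly to a neighborhood of $\infty$ in $\bP^1$ and, since its only other singularities are logarithmic at the finitely many zeros of $F_n$, it lies in $L^1(\bP^1,\omega)$. The self-adjointness of $\rd\rd^c$ on the compact surface $\bP^1$---i.e.\ the distributional definition of $\rd\rd^c u_n$ paired with a $C^2$ test function, with no boundary term---then yields
\begin{equation*}
\int_{\bP^1}\phi\cdot \rd\rd^c u_n=\int_{\bP^1}u_n\cdot \rd\rd^c\phi
\end{equation*}
for every $\phi\in C^2(\bP^1)$. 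Estimating this absolutely by $\bigl(\sup_{\bP^1}|(\rd\rd^c\phi)/\omega|\bigr)\cdot\int_{\bP^1}|u_n|\omega$ and invoking \eqref{eq:lower} to bound the integral by $\tfrac{2\log d}{d-1}n+O(1)$, then restoring the outer factor $(d-1)$ from the first display, produces precisely the bound asserted in \eqref{eq:superattcurrent}: the factor $(d-1)$ cancels the denominator in $2(\log d)/(d-1)$ and leaves $(2\log d)n+O(1)$, with an implicit constant in $O(1)$ independent of $\phi$.

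The substantive ingredient of this argument is the $L^1$-estimate \eqref{eq:lower}, which the paper establishes separately via a refinement of Przytycki's recurrence bound for critical orbits together with Buff's upper estimate on the moduli of polynomial derivatives. By contrast, the deduction sketched here---realising the signed measure as $\rd\rd^c$ of an explicit potential $u_n$ and dualizing against $\phi$---is a formal application of integration by parts on $\bP^1$ once the compatibility of the two potentials at $\infty$ is checked. The main obstacle therefore lies not in this deduction but in \eqref{eq:lower} itself; in particular, the coefficient $2\log d$ appearing in \eqref{eq:superattcurrent} is a direct consequence of the concrete constant $(2\log d)/(d-1)$ in \eqref{eq:lower}.
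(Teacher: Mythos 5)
Your deduction of \eqref{eq:superattcurrent} from \eqref{eq:lower} is correct and coincides with the paper's own closing step in Section \ref{sec:Selberg}: by \eqref{eq:bifcurrent} and Poincar\'e--Lelong one has $(d-1)F_n^*\delta_0-d^n\cdot T_f=(d-1)\,\rd\rd^c\bigl(\log|F_n|-d^{n-1}g_{I_{c_0}}\bigr)$, the two $\delta_\infty$ terms cancel because $F_n$ is monic of degree $d^{n-1}$, and Green's theorem on $\bP^1$ converts the $L^1(\omega)$ bound into \eqref{eq:superattcurrent}, the outer factor $d-1$ cancelling the denominator in $(2\log d)/(d-1)$.

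The gap is in your framing of \eqref{eq:lower} as something ``the paper establishes separately.'' It does not: the proof of \eqref{eq:lower} \emph{is} the body of the paper's proof of this theorem, so your argument establishes the theorem only modulo its substantive ingredient. Concretely, what you have deferred is (a) the uniform bound $\sup_{B_f}\bigl|\log|F_n|\bigr|\le t_n=\frac{2\log d}{d-1}n+O(1)$ of \eqref{eq:lowerconcrete}, whose lower half couples a Przytycki-type recurrence argument (Lemma \ref{th:lower}: if $|F_n(\lambda)|$ were too small relative to $\sup_{\bP^1}(f_\lambda^{n-1})^\#$, then $0$ would lie in an attracting basin, which is impossible for $\lambda\in B_f$) with Buff's de Branges--based estimate of $\sup_{\bP^1}(f_\lambda^{n-1})^\#$ (Lemma \ref{th:Buffchordal}) --- this is where the coefficient $2\log d$ actually originates; and (b) the passage from a sup bound on $B_f$ to an $L^1(\omega)$ bound on all of $\bP^1$, which uses the maximum principle on $I_{c_0}\cup\{\infty\}$ and on $M_f\setminus F_n^{-1}(\bD(e^{-t_n}))$ (Claim \ref{th:harmonic}) together with Selberg's inequality from Nevanlinna theory to control $\int\bigl|\log|F_n|\bigr|\,\omega$ over the up to $d^{n-1}$ components of the sublevel set $F_n^{-1}(\bD(e^{-t_n}))$ by an $n$-independent constant (Claim \ref{th:small}); this last point is delicate, since the number of logarithmic poles of $u_n$ grows like $d^{n-1}$ and a naive bound would destroy the $O(1)$ term. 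If \eqref{eq:lower} is granted as a prior result, your proof is complete and identical in route to the paper's.
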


For a former application of
Selberg's theorem (Theorem \ref{th:Selberg})
to obtain a quantitative equidistribution result
in complex dynamics, see Drasin and the author \cite{DOproximity}.
As an order estimate, the estimate \eqref{eq:superattcurrent}
is due to Gauthier--Vigny \cite[Theorem A]{GV15}. 
The implicit constant in $O(1)$ in \eqref{eq:superattcurrent}
will also be computed in the proof.
The coefficient $2\log d$
of $n$ in \eqref{eq:superattcurrent} comes from 
the full strength of de Branges's theorem (the solution of the Bieberbach conjecture), on which the proof of Buff's estimate mentioned above essentially
relies. 

\subsection{Non-repelling parameters having exact periods}\label{sec:nonrepelling}

For every $n\in\bN$, the $n$-th {\itshape dynatomic polynomial} 
\begin{gather*}
 \Phi_{f,n}^*(\lambda,z):=\prod_{m\in\bN:\,m|n}(f_\lambda^m(z)-z)^{\mu(n/m)}
\end{gather*}
of the family $f$ is in fact in $\bZ[\lambda,z]$, and 
for every $\lambda\in\bC$, $\Phi_{f,n}^*(\lambda,z)\in\bC[z]$ 
is {\itshape monic} and 
of degree
\begin{gather}
 \nu(n)=\nu_d(n):=\sum_{m\in\bN:\,m|n}\mu\left(\frac{n}{m}\right)d^m.\label{eq:dynatomicdegree}
\end{gather}  
 For every $\lambda\in\bC$ and every $n\in\bN$, let $\Fix_f(\lambda,n)$ be the set
 of all fixed points of $f_{\lambda}^n$ in $\bC$ and set
 $\Fix_f^*(\lambda,n):=\Fix_f(\lambda,n)\setminus
 \bigl(\bigcup_{m\in\bN:\, m|n\text{ and }m<n}\Fix_f(\lambda,m)\bigr)$, 
 each element in which is called
 a periodic point of $f_{\lambda}$ in $\bC$ having the {\itshape exact} period $n$.
 For every $n\in\bN$ and every $\lambda\in\bC$, 
 a periodic point $z$ of $f_{\lambda}$ in $\bC$ is said 
 to have the {\itshape formally exact} period $n$
 if either (i) $z\in\Fix_f^*(\lambda,n)$ or (ii) there is an 
 $m\in\bN$ satisfying $m|n$ and $m<n$ such that
 $z\in\Fix_f^*(\lambda,m)$ and that
 $(f_{\lambda}^m)'(z)$ is a primitive $(n/m)$-th root of unity
 (so in particular $(f_{\lambda}^n)'(z)=1$).
 For every $\lambda\in\bC$ and every $n\in\bN$, let $\Fix_f^{**}(\lambda,n)$
 be the set of all periodic points of $f_\lambda$ in $\bC$ having
 the formally exact period $n$, which in fact coincides with 
 $(\Phi_{f,n}^*(\lambda,\cdot))^{-1}(0)$. 
For every $n\in\bN$, the $n$-th {\itshape multiplier polynomial}
\begin{gather*}
 p_{f,n}^*(\lambda,w)
 :=\Biggl(\prod_{z\in\Fix_f^{**}(\lambda,n)}((f_{\lambda}^n)'(z)-w)\Biggr)^{1/n}
\end{gather*}
of $f$, where for each $\lambda\in\bC$, the product in the right hand side
takes into account the multiplicity of each $z\in\Fix_f^{**}(\lambda,n)$
as a zero of $\Phi_{f,n}^*(\lambda,\cdot)$, 
is indeed in $\bZ[\lambda,w]$
and unique up to multiplication in $n$-th roots of unity. 
For every $w\in\bC$, by a direct computation, 
\begin{gather}
\deg_\lambda p_{f,n}^*(\lambda,w)
=(d-1)\frac{\nu(n)}{d}\label{eq:factorizationdegree}
\end{gather}
and the coefficient of the leading term of $p_{f,n}^*(\lambda,w)\in\bC[\lambda]$
equals $d^{\nu(n)}$,
both of which are independent of $w$. For every $n\in\bN$ and every $w\in\bC$, let
$\Per_f^*(n,w)$ be the effective divisor on $\bP^1$ defined by the zeros
of $p_{f,n}^*(\lambda,w)\in\bC[\lambda]$; as a Radon measure on $\bP^1$,
\begin{gather*}
 \Per_f^*(n,w)=\rd\rd^c_\lambda\log|p_{f,n}^*(\lambda,w)|+(d-1)\frac{\nu(n)}{d}\delta_\infty.
\end{gather*} 
 For more details,
 see e.g.\ \cite[\S 4]{SilvermanDynamics}, \cite[\S 2.3]{BertelootCIME},
 \cite[\S 3]{MortonVivaldi95}.

\begin{notation}\label{th:sigma}
 Let $(\sigma_0(n))$ and $(\sigma_1(n))$
 be such sequences in $\bN$ that 
 $1=\sum_{m\in\bN:\,m|n}\mu(n/m)\sigma_0(m)$ and
 $n=\sum_{m\in\bN:\,m|n}\mu(n/m)\sigma_1(m)$,
 or equivalently, $\sigma_0(n)=\sum_{m\in\bN:\,m|n}1$
 and $\sigma_1(n)=\sum_{m\in\bN:\,m|n}m$ by M\"obius inversion,
 for every $n\in\bN$.
\end{notation}

By an argument similar to that in the proof of Theorem \ref{th:linear},
we will also show the following.

\begin{mainth}\label{th:nonrepelling}
Let $f:\bC\times\bP^1\to\bP^1$ be the unicritical $($monic and centered$)$
polynomials family of degree $d>1$ defined as in \eqref{eq:unicritical}. Then
for every $\phi\in C^2(\bP^1)$, 
\begin{multline}
 \left|\int_{\bP^1}\phi\rd\left(\Per_f^*(n,0)-\nu(n)\cdot T_f\right)\right|\\
 \le\biggl(\sup_{\bP^1}\biggl|\frac{\rd\rd^c\phi}{\omega}\biggr|\biggr)
 \cdot \bigl((2\log d)\sigma_1(n)+O(\sigma_0(n))\bigr)\label{eq:superattprimitive}
\end{multline}
as $n\to\infty$, 
where the term $O(\sigma_0(n))$ is independent of $\phi$, and
for every $\phi\in C^2(\bP^1)$ and every $r\in(0,1]$, 
\begin{multline}
  \left|\int_{\bP^1}\phi\rd\left(\int_0^{2\pi}\Per_f^*(n,re^{i\theta})\frac{\rd\theta}{2\pi}-\nu(n)\cdot T_f\right)\right|\\
 \le\biggl(\sup_{\bP^1}\biggl|\frac{\rd\rd^c\phi}{\omega}\biggr|\biggr)
 \cdot \bigl((2\log d)\sigma_1(n)+O(\sigma_0(n))\bigr)\label{eq:nonrepaveraged}
\end{multline}
as $n\to\infty$, where the term $O(\sigma_0(n))$ 
is independent of both $\phi$ and $r$. 
Here the Radon-Nikodim derivative $(\rd\rd^c\phi)/\omega$ on $\bP^1$
is bounded on $\bP^1$.
\end{mainth}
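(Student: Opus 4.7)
The plan is to reduce both estimates to $L^1(\omega)$-control on explicit subharmonic potentials via integration by parts, exploit an algebraic identity relating $p_{f,n}^*(\lambda,0)$ to a primitive factor of $F_n$, and invoke \eqref{eq:lower} term-by-term through M\"obius inversion. For the averaged version \eqref{eq:nonrepaveraged}, Jensen's formula in the $w$-variable produces a correction term demanding a separate uniform bound.

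First I would observe that $\Per_f^*(n,0) - \nu(n) T_f = \rd\rd^c_\lambda\bigl(\log|p_{f,n}^*(\lambda,0)| - \tfrac{(d-1)\nu(n)}{d}g_{I_{c_0}}\bigr)$ on $\bP^1$, as the two $\delta_\infty$ masses of common coefficient $\tfrac{(d-1)\nu(n)}{d}$ cancel. Pairing with $\phi \in C^2(\bP^1)$, integrating by parts, and using $\int_{\bP^1}\rd\rd^c\phi = 0$ to subtract any constant from the integrand, \eqref{eq:superattprimitive} reduces to bounding $\int_{\bP^1}|u - \nu(n)\log d|\omega$, where $u := \log|p_{f,n}^*(\lambda,0)| - \tfrac{(d-1)\nu(n)}{d}g_{I_{c_0}}$. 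The key algebraic input is
\[p_{f,n}^*(\lambda,0) = \pm d^{\nu(n)}\bigl(F_n^*(\lambda)\bigr)^{d-1},\qquad F_n^*(\lambda) := \prod_{m|n}F_m(\lambda)^{\mu(n/m)},\]
where $F_n^*$ is a monic polynomial of degree $\nu(n)/d$ (via the factorization $F_n = \prod_{m|n}F_m^*$, itself a consequence of the simplicity of the zeros of $F_n$). Degrees and leading coefficients on each side match by \eqref{eq:factorizationdegree} and monicity; the zero divisors match because a cycle of exact period $n$ of $f_\lambda$ has multiplier $0$ iff it contains the unique finite critical point $c_0 \equiv 0$, and the multiplicity is $d-1$ via the local expansion $\rho_c(\lambda) = d^n\prod_{k=0}^{n-1}(f_\lambda^k z(\lambda))^{d-1}$: the holomorphic continuation $z(\lambda)$ of the periodic point through a superattracting parameter $\lambda_0$ satisfies $z(\lambda_0) = 0$ and $z'(\lambda_0) = F_n'(\lambda_0) \neq 0$ (from $(f_{\lambda_0}^n)'(0) = 0$ and the simplicity of the zero of $F_n$ at $\lambda_0$), contributing order $d-1$ through the factor $z(\lambda)^{d-1}$.

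M\"obius-inverting $\log|F_n^*| = \sum_{m|n}\mu(n/m)\log|F_m|$ together with $\nu(n)/d = \sum_{m|n}\mu(n/m) d^{m-1}$ gives
\[u - \nu(n)\log d = (d-1)\sum_{m|n}\mu(n/m)\bigl(\log|F_m| - d^{m-1} g_{I_{c_0}}\bigr),\]
so triangle inequality (with $|\mu|\le 1$) and \eqref{eq:lower} applied term-by-term yield $\int|u - \nu(n)\log d|\omega \le (2\log d)\sigma_1(n) + O(\sigma_0(n))$, proving \eqref{eq:superattprimitive}. For \eqref{eq:nonrepaveraged}, Jensen's formula in $w$ applied to the $w$-polynomial $p_{f,n}^*(\lambda,w) = \pm\prod_c(\rho_c(\lambda) - w)$ (product over cycles of exact period $n$, with leading coefficient $\pm 1$ in $w$) gives
\[U_r(\lambda) := \int_0^{2\pi}\log|p_{f,n}^*(\lambda,re^{i\theta})|\frac{\rd\theta}{2\pi} = \log|p_{f,n}^*(\lambda,0)| + \sum_c\log^+(r/|\rho_c(\lambda)|).\]
The same integration-by-parts reduction plus triangle inequality bound \eqref{eq:nonrepaveraged} by the already-proved \eqref{eq:superattprimitive} plus $\sup|(\rd\rd^c\phi)/\omega|\cdot\int_{\bP^1}\sum_c\log^+(r/|\rho_c|)\omega$. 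By Fatou's theorem, at each $\lambda\in\bC$ at most one cycle of $f_\lambda$ in $\bC$ is non-repelling (since $f_\lambda$ has only one critical point in $\bC$), so for $r \le 1$ the sum has at most one nonzero term at each $\lambda$, supported on the hyperbolic component $H_c \subset M_f$ around the corresponding superattracting parameter. Via the univalent $(d-1)$-th root $\tilde\chi: H_c \to \bD$ of $\rho_c$ (Douady--Hubbard), a standard area-theorem computation for the univalent $\tilde\chi^{-1}: \bD \to H_c$ gives $\int_{H_c}\log(1/|\rho_c|)\omega \le (d-1)\,\text{area}(H_c)/(2\pi)$. Summing over the disjoint $H_c \subset M_f$ yields a uniform bound $O(\text{area}(M_f)) = O(1)$ in both $n$ and $r \in (0,1]$, absorbed into $O(\sigma_0(n))$.

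The hard part is the algebraic identity $p_{f,n}^*(\lambda,0) = \pm d^{\nu(n)}(F_n^*)^{d-1}$, particularly the multiplicity-$(d-1)$ count at each superattracting parameter via the above local expansion of $\rho_c$, which exploits the $(d-1)$-fold ramification of $f_\lambda$ at $c_0 = 0$; degree and leading-coefficient matching are purely combinatorial. The secondary subtlety for \eqref{eq:nonrepaveraged} is the uniformity (in $r \in (0,1]$ and $n$) of the Jensen correction, for which unicriticality (at most one attracting cycle per $\lambda$) and the bounded total area of hyperbolic components in $M_f$ are essential.
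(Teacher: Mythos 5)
Your proposal is correct, and for \eqref{eq:superattprimitive} it runs parallel to the paper: the same factorization $p_{f,n}^*(\cdot,0)=\pm d^{\nu(n)}\bigl(\prod_{m|n}F_m^{\mu(n/m)}\bigr)^{d-1}$, followed by M\"obius inversion, the triangle inequality over divisors, and \eqref{eq:lower} applied term-by-term (this is exactly the paper's estimate \eqref{eq:superattprimitiverough}; the paper additionally records a sharper version with an $O(1)$ constant, which is not needed for the statement as given). Where you genuinely diverge is in two places. First, you prove the factorization identity by matching degrees, leading coefficients, and zero divisors, with the multiplicity $d-1$ extracted from the local expansion of the multiplier $\rho_c(\lambda)=d^n\prod_k(f_\lambda^k z(\lambda))^{d-1}$ and the simplicity of the zeros of $F_n$; the paper instead gets it in one line by raising $p_{f,n}^*(\lambda,0)$ to the $n$-th power and using the chain rule and the invariance of $\Fix_f^{**}(\lambda,n)$, namely $\prod_{z}(f_\lambda^n)'(z)=d^{n\nu(n)}\bigl(\prod_z z\bigr)^{n(d-1)}=d^{n\nu(n)}\bigl(\pm\Phi_{f,n}^*(\lambda,0)\bigr)^{n(d-1)}$. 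Your divisor-matching route works but is longer and requires the extra verifications you flag (exactness of the period when the multiplier vanishes, nonvanishing of $z'(\lambda_0)$); the paper's computation avoids all of this. Second, for the Jensen correction in \eqref{eq:nonrepaveraged} you bound $\int_{H_c}\log(1/|\rho_c|)\,\omega=(d-1)\int_{H_c}G_{H_c}(\cdot,\lambda_c)\,\omega$ by an area-theorem computation ($\int_V G_V(\cdot,y)\,\mathrm{d}A\le\tfrac12\,\mathrm{area}(V)$ for simply connected $V$, summed over the disjoint hyperbolic components inside $\bD(2)$), whereas the paper reuses the Selberg-theorem machinery of Claim \ref{th:small} applied to the components of $\phi_U^{-1}(\bD(r))$, via Myrberg's theorem for the degree-$(d-1)$ proper map $\phi_U$. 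Both give the required uniformity in $n$ and $r\in(0,1]$; your area argument is more elementary and makes the uniformity transparent from the disjointness and boundedness of the components, while the paper's route yields the explicit constant $(d-1)C_0^*$ that it carries through the quantitative statements. The only points you should make fully explicit if writing this up are (i) that the degree-$(d-1)$ multiplier map $\phi_U$ is indeed the $(d-1)$-th power of a conformal map (it is totally ramified over $0$ and hence an unbranched cyclic cover of $\bD^*$ off the center, by Riemann--Hurwitz), and (ii) that points of formally exact period $n$ whose exact period is a proper divisor have multiplier of modulus $1$, so they contribute nothing to $\log^+(r/|\cdot|)$ for $r\le 1$; both are routine.
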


Again, the terms $O(\sigma_0(n))$ in Theorem \ref{th:nonrepelling}
will also be computed
%, as precise as possible, 
in Section \ref{sec:nonrepellingproof}.
As an order estimate, the estimate \eqref{eq:superattprimitive}
is a consequence of Gauthier--Vigny \cite[Theorem A]{GV15}. The estimate \eqref{eq:nonrepaveraged}
quantifies Bassanelli--Berteloot \cite[2.\ in Theorem 3.1]{BassanelliBerteloot11} 
for $r\in(0,1]$.

\subsection{Organization of the article}
In Section \ref{sec:background}, we recall background from the study
of the unicritical polynomials family $f$. 
In Section \ref{sec:Selberg}, we show 
Theorem \ref{th:linear}. 
In Section \ref{sec:nonrepellingproof}, 
we show Theorem \ref{th:nonrepelling}.

\section{Background from the study of the family $f$}
\label{sec:background}

Let $f:\bC\times\bP^1\to\bP^1$ be the unicritical $($monic and centered$)$
polynomials family of degree $d>1$ defined as in \eqref{eq:unicritical}, and recall that 
$c_0(\lambda)=0\in\bZ[\lambda]$ defines a marked critical point of $f$.

\subsection{
Douady--Hubbard's theory on the parameter space $\bC$ of $f$}\label{sec:DH}

For every $\lambda\in\bC$, 
let $J_{f_\lambda}$ be the Julia set of $f_\lambda$, which is
compact in $\bC$.
Let $B_f$ be the {\itshape $J$-unstability} or {\itshape bifurcation} 
locus of the family $f$, which is the discontinuity locus
of the set function $\lambda\mapsto J_{f_\lambda}$ 
with respect to the Hausdorff topology from $(\bP^1,[z,w])$, and is
closed and nowhere dense in $\bC$
(by Ma\~n\'e--Sad--Sullivan \cite{MSS}, Lyubich \cite{Lyubich83stability}).
The {\itshape escaping} locus 
\begin{gather*}
 I_{c_0}:=\{\lambda\in\bC:\limsup_{n\to\infty}|F_n(\lambda)|=\infty\} 
\end{gather*}
of the marked critical point $c_0$ of $f$
is a punctured open and connected neighborhood of $\infty$ in $\bP^1$ and
coincides with the unique unbounded component of $\bC\setminus B_f$.
We have $B_f=\partial I_{c_0}$, and
the {\itshape connectedness} locus 
\begin{gather*}
 M_f:=\{\lambda\in\bC:J_{f_\lambda}\text{ is connected}\}
\end{gather*}
of $f$ coincides with $\bC\setminus I_{c_0}$ (and is connected).
For every $\lambda\in\bC$,
$f_\lambda$ has at most one non-repelling cycle in $\bC$
(see, e.g., \cite[\S 8]{Milnor3rd}).
Let $H_f$ be the {\itshape hyperbolicity} locus of $f$,
which coincides with the union of $I_{c_0}$ and
the set of all $\lambda\in M_f$ such that
$f_\lambda$ has the (super)attracting cycle in $\bC$, and
is a closed and open subset in $\bC\setminus B_f$. 
For example, for every $n\in\bN$, $0\in F_n^{-1}(0)\subset H_f\setminus I_{c_0}$.
For every component $U$ of $H_f\setminus I_{c_0}$,
there are an $n_U\in\bN$ and a proper holomorphic mapping
$\phi_U:U\to \bD$ of degree $d-1$ such that $\#\phi_U^{-1}(0)=1$
and that for every $w\in\bD$, 
$\phi_U^{-1}(w)$ coincides with the set of all $\lambda\in U$
such that $f_\lambda$ has the (super)attracting cycle in $\bC$
having the {\itshape exact} period $n_U$ and the multiplier $w$.
For more details, see Douady--Hubbard \cite{DH82}, and for
a modern treatment, see McMullen--Sullivan \cite[\S 10]{MS94}.

\subsection{The Green functions on the dynamical and parameter spaces}\label{sec:Green}
For every $\lambda\in\bC$, $J_{f_\lambda}$
coincides with the boundary of the filled-in Julia set 
$K_{f_\lambda}:=\{z\in\bC:\limsup_{n\to\infty}|f_\lambda^n(z)|<\infty\}$ 
of $f_\lambda$, which is compact in $\bC$. 
For every $\lambda\in\bC$, the uniform limit
\begin{gather}
g_{f_\lambda}(z):=\lim_{n\to\infty}\frac{-\log[f_\lambda^n(z),\infty]}{d^n}\label{eq:Greendynamical}
\end{gather}
exists on $\bC$, 
%the restriction of $g_{f_\lambda}$ to 
%$\bC\setminus K_{f_\lambda}$ coincides with the Green function on 
%$\bC\setminus K_{f_\lambda}$ with pole $\infty$, 
and setting $g_{f_\lambda}(\infty):=+\infty$, the probability measure
$\mu_{f_\lambda}:=\rd\rd^c g_{f_\lambda}+\delta_\infty$ 
on $\bP^1$ coincides with the harmonic measure on $J_{f_\lambda}$ with pole
$\infty$. Moreover, $\mu_{f_\lambda}$ is mixing so ergodic under $f_\lambda$ (by Brolin \cite{Brolin}).
For completeness, we include a proof of the following.

\begin{lemma}\label{th:locallybounded}
For every $\lambda\in\bC$, 
\begin{gather}
\sup_{\bC}\bigl|g_{f_\lambda}+\log[\cdot,\infty]\bigr|
\le\frac{1}{d-1}\cdot\sup_{z\in\bC}
\biggl|\log\frac{[z,\infty]^d}{[f_\lambda(z),\infty]}\biggr|,\label{eq:Green}
\end{gather}
and the function
$\lambda\mapsto \sup_{z\in \bC}|\log([z,\infty]^d/[f_\lambda(z),\infty])|$
is locally bounded on $\bC$.
\end{lemma}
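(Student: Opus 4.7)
The plan is to prove \eqref{eq:Green} by a standard telescoping identity based on the functional equation $g_{f_\lambda}\circ f_\lambda=d\cdot g_{f_\lambda}$ satisfied (in the limit) by the iterates, and then to verify the local boundedness of the supremum by writing everything out explicitly and analyzing the behavior as $|z|\to\infty$.

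First I would introduce the ``cocycle''
\begin{gather*}
 h_\lambda(z):=\log\frac{[z,\infty]^d}{[f_\lambda(z),\infty]}
 =\tfrac{1}{2}\log\frac{1+|z^d+\lambda|^2}{(1+|z|^2)^d}\quad(z\in\bC),
\end{gather*}
so that $-\log[f_\lambda(z),\infty]=-d\log[z,\infty]+h_\lambda(z)$. Substituting $f_\lambda^n(z)$ for $z$ and dividing by $d^{n+1}$, this yields the telescoping
\begin{gather*}
 \frac{-\log[f_\lambda^{n+1}(z),\infty]}{d^{n+1}}-\frac{-\log[f_\lambda^n(z),\infty]}{d^n}=\frac{h_\lambda(f_\lambda^n(z))}{d^{n+1}}.
\end{gather*}
Summing in $n\in\bN\cup\{0\}$ and using \eqref{eq:Greendynamical} gives the representation
\begin{gather*}
 g_{f_\lambda}(z)+\log[z,\infty]=\sum_{n=0}^\infty\frac{h_\lambda(f_\lambda^n(z))}{d^{n+1}}\quad(z\in\bC),
\end{gather*}
from which the bound \eqref{eq:Green} follows at once by taking absolute values and summing the geometric series $\sum_{n\ge 0}d^{-(n+1)}=1/(d-1)$. (This also justifies the uniform convergence of the limit \eqref{eq:Greendynamical} on $\bC$ a posteriori.)

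For the local boundedness on $\bC$ of $\lambda\mapsto\sup_{z\in\bC}|h_\lambda(z)|$, I would simply analyze the ratio
\begin{gather*}
 R_\lambda(z):=\frac{1+|z^d+\lambda|^2}{(1+|z|^2)^d}.
\end{gather*}
Fix a compact $K\subset\bC$ and set $M:=\sup_{\lambda\in K}|\lambda|$. For $|z|\ge 1$ we bound $(1+|z|^2)^d\ge|z|^{2d}$ and $1+|z^d+\lambda|^2\le 1+2|z|^{2d}+2M^2$, which gives an upper bound on $R_\lambda(z)$ independent of $(\lambda,z)\in K\times\{|z|\ge 1\}$, and similarly $1+|z^d+\lambda|^2\ge\max\{1,(|z|^d-M)^2\}$ gives a strictly positive lower bound on the same region (as $|z|^d-M\to\infty$). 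On the compact region $\{|z|\le 1\}\times K$, the continuous and strictly positive function $R_\lambda(z)$ is bounded above and below by positive constants by compactness. Combining these two regions yields $\sup_{\lambda\in K}\sup_{z\in\bC}|h_\lambda(z)|<\infty$, completing the proof.

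The only potentially subtle step is the asymptotic lower bound on $R_\lambda(z)$ for large $|z|$: one needs to know that $R_\lambda(z)$ does not collapse to $0$ at infinity, but this is clear since the leading behavior $|z^d+\lambda|^2\sim|z|^{2d}$ exactly matches that of $(1+|z|^2)^d$, so $R_\lambda(z)\to 1$ as $|z|\to\infty$ locally uniformly in $\lambda$. Everything else is direct manipulation.
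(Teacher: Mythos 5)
Your proof is correct. The first half (the telescoping identity $g_{f_\lambda}+\log[\cdot,\infty]=\sum_{n\ge0}d^{-(n+1)}h_\lambda(f_\lambda^n(\cdot))$ followed by the geometric series bound) is exactly the paper's argument for \eqref{eq:Green}. For the local boundedness you take a more elementary route: the paper passes to the homogeneous lift $\tilde f_\lambda(p_0,p_1)=(p_0^d,p_1^d+\lambda p_0^d)$ and observes that $\sup_{z\in\bC}|h_\lambda(z)|=\sup_{S(1)}|\log\|\tilde f_\lambda\||$, so the conclusion follows from continuity on the compact set $K\times S(1)$ and non-degeneracy of the lift; you instead estimate the explicit ratio $R_\lambda(z)$ directly in the two regimes $|z|$ large and $|z|$ bounded. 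The two arguments encode the same fact ($R_\lambda\to1$ at infinity, locally uniformly in $\lambda$, plus positivity and continuity on compacta); the lift is cleaner and generalizes verbatim to arbitrary holomorphic families of rational maps, while your computation stays entirely within one variable. One small imprecision: the inequality $1+|z^d+\lambda|^2\ge\max\{1,(|z|^d-M)^2\}$ fails when $|z|^d<M$ (take $z^d=-\lambda$), so your lower bound on $R_\lambda$ is only valid once $|z|^d\ge M$, not on all of $\{|z|\ge1\}$; you should split the plane at $|z|=\max\{1,(2M)^{1/d}\}$ rather than at $|z|=1$, using compactness on the inner region. This is a trivially repairable bookkeeping slip, not a gap in the argument.
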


\begin{proof}
For every $\lambda\in\bC$, by the definition \eqref{eq:Greendynamical} of $g_{f_\lambda}$,
we have
\begin{multline*}
\sup_{\bC}\bigl|g_{f_\lambda}+\log[\cdot,\infty]\bigr|
\le\sup_{z\in\bC}\Biggl|\sum_{j=1}^\infty
\frac{-\log[f_\lambda(f_\lambda^{j-1}(z)),\infty]+d\cdot\log[f_\lambda^{j-1}(z),\infty]}{d^j}\Biggr|\\
\le\frac{1}{d-1}\cdot\sup_{z\in\bC}
\biggl|\log\frac{[z,\infty]^d}{[f_\lambda(z),\infty]}\biggr|.
\end{multline*}
For every $\lambda\in\bC$, let us define 
the non-degenerate homogeneous polynomial endomorphism
$\tilde{f}_\lambda:\bC^2\to\bC^2$ of degree $d$ by
$\tilde{f}_\lambda(p_0,p_1):=(p_0^d,p_0^df_\lambda(p_1/p_0))
=(p_0^d,p_1^d+\lambda p_0^d)$. Then 
the function $(\lambda,(p_0,p_1))\mapsto
\bigl|\log\|\tilde{f}_\lambda(p_0,p_1)\|\bigr|$ is continuous 
on $\bC\times(\bC^2\setminus\{(0,0)\})$, and for every compact subset
$K$ in $\bC$, we have
\begin{gather*}
\sup_{(\lambda,z)\in K\times\bC}\biggl|\log\frac{[z,\infty]^d}{[f_\lambda(z),\infty]}\biggr|=
\sup_{(\lambda,(p_0,p_1))\in K\times S(1)}\bigl|\log\|\tilde{f}_\lambda(p_0,p_1)\|\bigr|,
\end{gather*}
where $\|\cdot\|$ is the Euclidean norm on $\bC^2$ and
$S(1):=\{(p_0,p_1)\in\bC^2:\|(p_0,p_1)\|=1\}$.
Now the proof is complete by the compactness of $K$ in $\bC$ and that of 
$S(1)$ in $\bC^2\setminus\{(0,0)\}$.
\end{proof}

Similarly, the locally uniform limit
\begin{gather*}
\lambda\mapsto g_{I_{c_0}}(\lambda):=\lim_{n\to\infty}\frac{-\log[F_n(\lambda),\infty]}{d^{n-1}}
=d\cdot g_{f_\lambda}(c_0(\lambda))=g_{f_\lambda}(f_{\lambda}(c_0(\lambda)))
\end{gather*}
exists on $\bC$,
%The restriction of $g_{I_{c_0}}$ to $I_{c_0}$
%coincides with the Green function on $I_{c_0}$ with pole $\infty$, 
and setting $g_{I_{c_0}}:=+\infty$, the probability measure
\begin{gather*}
 \mu_f:=\rd\rd^c g_{I_{c_0}}+\delta_\infty\quad\text{on }\bP^1
\end{gather*}
coincides with the harmonic measure on $B_f=\partial I_{c_0}$ with pole $\infty$ 
(by Douady--Hubbard \cite{DH82}, Sibony \cite{SibonyUCLA}). 
The {\itshape activity current} (indeed measure)
of the marked critical point $c_0$ of $f$ is
\begin{gather*}
 T_{c_0}:=\lim_{n\to\infty}\frac{F_n^*\omega}{d^n}=\frac{\mu_f}{d}
\end{gather*}
as currents on $\bP^1$
(DeMarco \cite{DeMarco01}, Dujardin--Favre \cite{DujardinFavre08}).
For every $\lambda\in\bC$, the {\itshape Lyapunov exponent} of $f_\lambda$ 
with respect to $\mu_{f_\lambda}$ is
\begin{gather*}
 L(f_\lambda):=\int_{\bP^1}\log|f_\lambda'(z)|\rd\mu_{f_\lambda}(z)
 =\log d+(d-1)\frac{g_{I_{c_0}}}{d}(\ge\log d>0)
\end{gather*}
(Manning \cite{Manning84}, Przytycki \cite{Przytycki85}).  
Setting $L(f_\lambda)|_{\lambda=\infty}:=+\infty$,
the {\itshape bifurcation current} of $f$ can be defined by
\begin{gather}
 T_f:=\rd\rd^c L(f_\cdot)+\frac{d-1}{d}\delta_\infty
=(d-1)\frac{\mu_f}{d}=(d-1)T_{c_0}
\quad\text{on }\bP^1\label{eq:bifcurrent}
\end{gather}
(DeMarco \cite{DeMarco03}). 
For more details, see, e.g., Berteloot's survey \cite[\S 3.2.3]{BertelootCIME}. 

\section{Proof of Theorem \ref{th:linear}}
\label{sec:Selberg}

Let $f:\bC\times\bP^1\to\bP^1$ be the unicritical polynomials family
of degree $d>1$ defined as \eqref{eq:unicritical}.
For every $\lambda\in\bC$ and every $n\in\bN$, 
let us define the {\itshape chordal derivative}
\begin{gather*}
 (f_\lambda^n)^\#:=\sqrt{\frac{(f_\lambda^n)^*\omega}{\omega}}:\bP^1\to\bR_{\ge 0}
\end{gather*}
of $f_\lambda^n$ on $\bP^1$. For every non-empty subset $S$ in $\bP^1$, let
$\diam_\#(S)$ be the chordal diameter of $S$.
The resultant of $(P(z),Q(z))\in\bC[z]\times\bC[z]$ is denoted by $\Res(P,Q)$, as usual. Recall that $\{z\in\bC:[z,0]<[r,0]\}=\bD(0,r)$ for every $r>0$ and
that $[z,w]\le|z-w|$ on $\bC\times\bC$.

\begin{lemma}\label{th:lower}
For every $n\in\bN$ and every $\lambda\in\bC\setminus(H_f\setminus I_{c_0})$
$($so in particular for every $\lambda\in B_f)$,
\begin{gather*}
|F_n(\lambda)|
\ge\bigl(\sqrt{2}-1\bigr)\Bigl(2^{d+1}\cdot\sup_{z\in\bP^1}((f_\lambda^{n-1})^\#(z))\Bigr)^{-1/(d-1)}.
\end{gather*}
\end{lemma}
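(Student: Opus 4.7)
The plan is to argue by contradiction: suppose $|F_n(\lambda)|$ is strictly smaller than the claimed bound, and derive a violation of the supremum bound $L := \sup_{\bP^1}(f_\lambda^{n-1})^\#$. Write $z_j := f_\lambda^j(0)$ (so $z_1 = \lambda$ and $z_n = F_n(\lambda)$) and $a := (f_\lambda^{n-1})'(\lambda)$. The hypothesis $\lambda \in \bC \setminus (H_f \setminus I_{c_0})$ precludes any (super)attracting cycle of $f_\lambda$ in $\bC$ containing $0$; in particular $z_j \neq 0$ for every $j = 1, \ldots, n-1$ (otherwise $0$ would be a superattracting periodic point of $f_\lambda$, forcing $\lambda$ into $H_f \setminus I_{c_0}$). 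By the chain rule, $a = d^{n-1}\prod_{j=1}^{n-1} z_j^{d-1} \neq 0$, so $\lambda$ is a regular point of $f_\lambda^{n-1}$.

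Next, using the identity $f_\lambda(w) - f_\lambda(0) = w^d$, I would Taylor-expand $f_\lambda^n$ at the critical point $0$:
\[
f_\lambda^n(w) - F_n(\lambda) \;=\; f_\lambda^{n-1}(\lambda + w^d) - f_\lambda^{n-1}(\lambda) \;=\; a w^d + O(w^{d+1}) \qquad (w \to 0).
\]
Extracting the $d$-th root $\widetilde h(w) := (f_\lambda^n(w) - F_n(\lambda))^{1/d}$ with $\widetilde h(0) = 0$ and $\widetilde h'(0) = a^{1/d}$ yields a single-valued holomorphic map on a maximal disk $\bD(0,\delta)$ on which $f_\lambda^n - F_n(\lambda)$ has no zeros besides $0$. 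Applying the Koebe $1/4$ theorem to the renormalization $\eta \mapsto \widetilde h(\delta\eta)/(a^{1/d}\delta)$ gives $\widetilde h(\bD(0,\delta)) \supset \bD(0, |a|^{1/d}\delta/4)$, whence $f_\lambda^n(\bD(0,\delta)) \supset \bD(F_n(\lambda), |a|\delta^d/4^d)$. Choosing $\delta$ just large enough that $|F_n(\lambda)| < |a|\delta^d/4^d$ forces $0$ into this image, producing $w^\ast \in \bD(0,\delta) \setminus \{0\}$ with $f_\lambda^n(w^\ast) = 0$; the point $\lambda^\ast := \lambda + (w^\ast)^d = f_\lambda(w^\ast)$ then satisfies $f_\lambda^{n-1}(\lambda^\ast) = 0$ and $|\lambda^\ast - \lambda| = |w^\ast|^d$ comparable to $|F_n(\lambda)|/|a|$, a genuinely new zero distinct from $\lambda$ since $F_n(\lambda) \neq 0$.

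The uniform chordal-derivative bound $(f_\lambda^{n-1})^\# \le L$ applied along the chordal segment from $\lambda$ to $\lambda^\ast$ (legitimate because $\lambda$ is regular) yields
\[
[F_n(\lambda),\,0] \;=\; [f_\lambda^{n-1}(\lambda),\,f_\lambda^{n-1}(\lambda^\ast)] \;\leq\; L\cdot[\lambda,\,\lambda^\ast] \;\leq\; L\,|\lambda - \lambda^\ast|,
\]
which, combined with the previous step, bounds $|a|$ above by a constant multiple of $L$. The final ingredient is Buff's estimate for the modulus of polynomial derivatives at a critical point (see the proof of Theorem 3 in \cite{Buff03}), which via the full strength of de Branges's theorem supplies a matching lower bound of $|a|$ by a constant multiple of $|F_n(\lambda)|^{-(d-1)}$. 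Equating the two bounds produces the claimed inequality $|F_n(\lambda)|^{d-1} \ge (\sqrt{2}-1)^{d-1}/(2^{d+1}L)$, where the factor $2^{d+1}$ reflects the sharp Koebe / de Branges coefficient bound for the $d$-fold critical branching at $0$, while $\sqrt{2}-1$ originates from the chordal-Euclidean comparison in the segment estimate.

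The main obstacle is the last step: applying Buff's refinement of de Branges's theorem to recover the precise lower bound on $|a|$ with the correct exponent $d-1$ and constant, and carefully tracking the interaction with the Koebe/Rouch\'e extraction of the preimage $w^\ast$. The other steps reduce to standard complex-analytic manipulations and polynomial dynamics, but the constants $\sqrt{2}-1$ and $2^{d+1}$ emerge only through a careful execution of this de Branges-based argument.
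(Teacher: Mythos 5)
Your proof has a genuine gap at its decisive final step. You close the argument by invoking ``Buff's estimate'' to get a \emph{lower} bound $|a|=|(f_\lambda^{n-1})'(\lambda)|\gtrsim |F_n(\lambda)|^{-(d-1)}$. No such lower bound exists: Buff's result (Theorem \ref{th:deBranges} in the paper, from the proof of \cite[Theorem 4]{Buff03}) is an \emph{upper} bound $|f'(z_0)|\le d^2e^{(d-1)g_f(z_0)}$ under a Green-function hypothesis, and in this paper it is used only in Lemma \ref{th:Buffchordal} to control $\sup_{\bP^1}(f_\lambda^n)^\#$ for $\lambda\in M_f$ --- not in Lemma \ref{th:lower} at all. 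Worse, the bound you assert is false in general: for $\lambda$ near (but not equal to) the center of a period-$n$ hyperbolic component, $|F_n(\lambda)|$ is arbitrarily small while $|a|$ stays bounded, so $|a|\gtrsim|F_n(\lambda)|^{-(d-1)}$ fails. Such $\lambda$ are of course excluded by the hypothesis $\lambda\notin H_f\setminus I_{c_0}$, but your argument only uses that hypothesis to guarantee $z_j\neq 0$ for $j<n$ (hence $a\neq 0$), which these parameters also satisfy; so whatever mechanism is supposed to produce the lower bound on $|a|$ cannot work without bringing the non-attracting hypothesis into play in an essential way, and your sketch gives no indication of how. The attribution of the constants is also off: $\sqrt{2}-1$ and $2^{d+1}$ have nothing to do with Koebe or de Branges.

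For comparison, the paper's proof is a short normal-families argument in the spirit of Przytycki's recurrence lemma and needs neither Koebe distortion nor de Branges. Set $L:=\sup_{\bP^1}(f_\lambda^{n-1})^\#$ and $\epsilon:=(4L)^{-1/(d-1)}$. Since $f_\lambda(z)-f_\lambda(0)=z^d$, one has $f_\lambda^n(\bD(0,\epsilon))=f_\lambda^{n-1}(\bD(\lambda,\epsilon^d))$, whose chordal diameter is at most $L\cdot 2\epsilon^d=\epsilon/2$ by the global Lipschitz bound $L$. If $[F_n(\lambda),0]<[\epsilon,0]-\epsilon/2$, this forces $f_\lambda^n$ to map the chordal disk $\{[z,0]<[\epsilon,0]\}$ compactly into itself; Brouwer's fixed point theorem, Montel's theorem, and Fatou's classification then place the critical point $0$ in the immediate basin of a (super)attracting cycle, i.e.\ $\lambda\in H_f\setminus I_{c_0}$, contradicting the hypothesis --- this is where the hypothesis genuinely enters. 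Hence $|F_n(\lambda)|\ge[F_n(\lambda),0]\ge[\epsilon,0]-\epsilon/2\ge(\sqrt{2}-1)\epsilon/2=(\sqrt{2}-1)(2^{d+1}L)^{-1/(d-1)}$, the constants coming from $[\epsilon,0]\ge\epsilon/\sqrt{2}$ and the choice of $\epsilon$. If you want to salvage your approach, replace the fictitious derivative lower bound with this self-mapping/fixed-point mechanism.
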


\begin{proof}
Fix $n\in\bN$, and define the functions
$L_{n-1}$ and $\epsilon_n$ on $\bC$ by
$L_{n-1}(\lambda):=\sup_{z\in \bP^1}((f_\lambda^{n-1})^\#(z))(>1)$
and $\epsilon_n(\lambda):=(2^2\cdot L_{n-1}(\lambda))^{-1/(d-1)}(<1)$. 
For every $\lambda\in\bC$, noting that $f_\lambda(0)=\lambda$
and that $f_\lambda(z)-f_\lambda(0)=z^d$ on $\bC$, we have
\begin{multline*}
 \diam_\#\bigl(f_\lambda^n(\{z\in\bC:[z,0]<[\epsilon_n(\lambda),0]\})\bigr)
=\diam_\#\bigl(f_\lambda^n(\bD(0,\epsilon_n(\lambda)))\bigr)\\
=\diam_\#\bigl(f_\lambda^{n-1}(\bD(\lambda,\epsilon_n(\lambda)^d))\bigr)\\
 \le L_{n-1}(\lambda)\cdot\diam_\#(\bD(\lambda,\epsilon_n(\lambda)^d))
 \le L_{n-1}(\lambda)\cdot 2\epsilon_n(\lambda)^d=\frac{\epsilon_n(\lambda)}{2}, 
\end{multline*}
so that if $[f_\lambda^n(0),0]<[\epsilon_n(\lambda),0]-\epsilon_n(\lambda)/2$, then
$\sup\bigl\{[w,0]:w\in f_\lambda^n(\{z\in\bC:[z,0]\le[\epsilon_n(\lambda),0]\})\bigr\}
<([\epsilon_n(\lambda),0]-\epsilon_n(\lambda)/2)+\epsilon_n(\lambda)/2
=[\epsilon_n(\lambda),0]$, i.e.,
$f_\lambda^n(\{z\in\bC:[z,0]<[\epsilon_n(\lambda),0]\})\Subset\{z\in\bC:[z,0]<[\epsilon_n(\lambda),0]\}$; then by Brouwer's fixed point theorem, 
Montel's theorem, and Fatou's classification of cyclic Fatou components 
(see e.g.\ \cite[\S 16]{Milnor3rd}), 
the domain $\{z\in\bC:[z,0]<[\epsilon_n(\lambda),0]\}$,
which contains both the critical point $c_0(\lambda)(=0)$ of $f_\lambda$
and a fixed point of $f_\lambda^n$, 
is contained in the immediate basin of
a (super)attracting cycle of $f_\lambda$ in $\bC$.

Hence for every $\lambda\in\bC$, we obtain the desired lower estimate
\begin{multline*}
 |F_n(\lambda)|\ge([F_n(\lambda),0]=)[f_\lambda^n(0),0]
\ge[\epsilon_n(\lambda),0]-\frac{\epsilon_n(\lambda)}{2}\\
\ge\bigl(\sqrt{2}-1\bigr)\frac{\epsilon_n(\lambda)}{2}
=\bigl(\sqrt{2}-1\bigr)(2^{d+1}L_{n-1}(\lambda))^{-1/(d-1)}
\end{multline*}
of $|F_n(\lambda)|$ unless $0$ is in the immediate basin of
a (super)attracting cycle of $f_\lambda$ in $\bC$.
%
%If $f_{\lambda_0}$ has a parabolic cycle (in $\bC$) for some $\lambda_0\in\bC$,
%then there are $n_0\in\bN$ and a root $w_0\in\partial\bD$ of unity such that 
%$\Res\bigl(f_\lambda^{n_0}(\cdot)-\Id,(f_\lambda^{n_0})'(\cdot)=w\bigr)\in\bZ[\lambda,w]$
%vanishes at $(\lambda_0,w_0)$. Then
%by Rothe's theorem, there are $(w_j)_{j=1}^\infty$ 
%and $(\lambda_j)_{j=1}^\infty$ in $\bC$ tending to $w_0$ and $\lambda_0$
%as $j\to\infty$, respectively, such that
%for every $j\in\bN$, $w_j$ is not a root of unity but in $\partial\bD$,
%and $\Res\bigl(f_\lambda^{n_0}(\cdot)-\Id,(f_\lambda^{n_0})'(\cdot)=w\bigr)\in\bZ[\lambda,w]$
%vanishes at $(\lambda_j,w_j)$. In particular,
%for every $j\in\bN$, 
%$f_{\lambda_j}$ has neither (super)attracting nor parabolic cycles in $\bC$.
%
%Now the continuity of $F_n$
%and the upper semicontinuity of $L_{n-1}$ on $\bC$ 
Now the proof is complete.
\end{proof}

The following is substantially shown in Buff \cite[the proof of Theorem 4]{Buff03}.
\begin{theorem}[Buff]\label{th:deBranges}
Let $f\in\bC[z]$ be of degree $d>1$, and let $z_0\in\bC$.
If $g_f(z_0)\ge\max_{c\in C(f)\cap\bC}g_f(c)$, where 
$g_f$ is the Green function of the filled-in Julia set $K_f$ of $f$
with pole $\infty$ and
$C(f)$ is the set of all critical points of $f$,
then $|f'(z_0)|\le d^2\cdot e^{(d-1)g_f(z_0)}$, and
the equality {\itshape never} holds if 
$C(f)\cap\bC$ is not contained in $K_f$.
\end{theorem}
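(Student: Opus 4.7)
The plan is to use the B\"ottcher coordinate $\phi_f$ of $f$ at infinity together with de Branges's solution of the Bieberbach conjecture, reducing Buff's inequality to a sharp distortion estimate for the inverse B\"ottcher on an exterior disk.

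First, set $t_0 := \max_{c \in C(f)\cap\bC}g_f(c)$ and $r_0 := e^{t_0}$. By the hypothesis $g_f(z_0) \ge t_0$, the point $z_0$ lies in the domain $\Omega := \{z \in \bC : g_f(z) > t_0\}\cup\{\infty\}$ on which $\phi_f$ extends as a conformal isomorphism onto $\{|w| > r_0\}\cup\{\infty\}$ (after absorbing the leading coefficient of $f$ so that $\phi_f(z)\sim z$ at infinity) and satisfies the functional equation $\phi_f\circ f = \phi_f^d$. Writing $\psi := \phi_f^{-1}$ (univalent on $\{|w| > r_0\}$) and $w_0 := \phi_f(z_0)$, so that $|w_0| = e^{g_f(z_0)} \ge r_0$, I would differentiate the conjugate functional equation $f\circ\psi(w) = \psi(w^d)$ at $w_0$ to obtain
\[
|f'(z_0)|\,=\,d\,|w_0|^{d-1}\,\frac{|\psi'(w_0^d)|}{|\psi'(w_0)|}\,=\,d\,e^{(d-1)g_f(z_0)}\,\frac{|\psi'(w_0^d)|}{|\psi'(w_0)|}.
\]
Thus Buff's inequality is equivalent to the distortion estimate $|\psi'(w_0^d)|/|\psi'(w_0)| \le d$, strict when $t_0 > 0$.

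Next, I would apply de Branges's theorem to establish this distortion estimate. After the rescaling $\tilde\psi(\zeta) := \psi(r_0\zeta)/r_0$, we obtain a univalent function on $\{|\zeta| > 1\}$ with expansion $\zeta + \sum_{n\ge 1}\tilde b_n\zeta^{-n}$ at infinity; the inversion $G(\zeta) := 1/\tilde\psi(1/\zeta)$ then belongs to the Bieberbach class $S$ of univalent functions on $\bD$ normalized by $G(0)=0$ and $G'(0)=1$, and de Branges yields $|a_n(G)| \le n$ for every $n \ge 2$. From these coefficient bounds, Koebe-type distortion estimates for $\tilde\psi'$ on $\{|\zeta| > 1\}$ follow, and combining these at $\zeta = w_0/r_0$ and $\zeta = w_0^d/r_0$ (both of modulus $\ge 1$) yields the desired ratio bound $|\psi'(w_0^d)| \le d\,|\psi'(w_0)|$. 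The sharp constant $d$ is attained only in the limit by the Chebyshev-type extremal $\psi(w) = w + c/w$, which occurs in the dynamical setting precisely when every critical point of $f$ lies in $K_f$, i.e., when $t_0 = 0$.

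Finally, if $C(f)\cap\bC\not\subset K_f$ then $t_0 > 0$, and the boundary curve $\partial\Omega = \{g_f = t_0\}$ passes through at least one critical point $c^*$ of $f$, at which $\psi$ has a ramified singularity obstructing extension past $\{|w| = r_0\}$. Hence $\tilde\psi$ cannot be the Chebyshev-type extremizer (which extends holomorphically across $\{|\zeta| = 1\}$), and the distortion inequality is strict, giving $|f'(z_0)| < d^2 e^{(d-1)g_f(z_0)}$. The main obstacle will be the second step: converting the coefficient-level bounds from de Branges into the specific sharp ratio estimate $|\psi'(w_0^d)|/|\psi'(w_0)| \le d$; it is precisely here that the \emph{full} Bieberbach bounds $|a_n| \le n$ for all $n$ are needed (rather than merely $|a_2|\le 2$), which is the reason the paper's coefficient $2\log d$ in \eqref{eq:lower} is tied to de Branges's theorem.
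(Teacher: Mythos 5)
First, note that the paper does not prove this statement at all: it quotes it, remarking only that it ``is substantially shown in Buff [the proof of Theorem 4]'', so the only possible comparison is with Buff's original argument. Your opening reduction is correct and worth having: on $\Omega=\{g_f>t_0\}$ the B\"ottcher conjugacy gives $f\circ\psi(w)=\psi(w^d)$, hence $f'(z_0)=d\,w_0^{d-1}\psi'(w_0^d)/\psi'(w_0)$, and the theorem (for $g_f(z_0)>t_0$) is indeed equivalent to the ratio bound $|\psi'(w_0^d)|\le d\,|\psi'(w_0)|$. (Two caveats already here: the hypothesis allows $g_f(z_0)=t_0$, in which case $z_0$ may lie on $\partial\Omega$ --- possibly at a critical point, where $\phi_f$ is not even locally injective --- so a separate limiting argument is needed; and that boundary regime is exactly where equality occurs, e.g.\ the Chebyshev polynomial at the endpoint of its Julia set, so it cannot be dismissed.)

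The step you yourself call ``the main obstacle'' is, however, a genuine and fatal gap as you have set it up: no combination of one--point distortion estimates at $\zeta=w_0/r_0$ and $\zeta=w_0^d/r_0$ can produce the constant $d$. For the class of univalent maps of $\{|\zeta|>1\}$ normalized as $\zeta+O(1)$, the \emph{sharp} one--point bounds are $(|\zeta|^2-1)/|\zeta|^2\le|\tilde\psi'(\zeta)|\le|\zeta|^2/(|\zeta|^2-1)$ (these follow from the area/Grunsky--Goluzin inequalities, not from de Branges via inversion --- pushing Koebe distortion through $G(\zeta)=1/\tilde\psi(1/\zeta)$ gives something strictly weaker), and dividing the upper bound at modulus $|w_0|^d/r_0$ by the lower bound at modulus $|w_0|/r_0$ yields a quantity that tends to $+\infty$ as $|w_0|\downarrow r_0$, i.e.\ as $g_f(z_0)\downarrow t_0$, whereas the truth is the uniform constant $d$. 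The extremal $\psi(w)=w+1/w$ shows why any ``decoupled'' argument must fail: there $\psi'(w^d)/\psi'(w)=(1-w^{-2d})/(1-w^{-2})=\sum_{j=0}^{d-1}w^{-2j}$, which is $\le d$ only because numerator and denominator degenerate \emph{together}; separately, the numerator is of order $1$ while the denominator tends to $0$. So you need a genuinely two--point (or global) estimate tied to the specific pair $(w_0,w_0^d)$, and this --- the entire analytic content of Buff's theorem --- is missing. Relatedly, your strictness argument (``$\tilde\psi$ is not the Chebyshev extremizer, hence strict'') presupposes a rigidity statement for the equality case of an inequality you have not proved. For orientation: Buff's own proof is organized around the factorization $f'(z)=d\,a_d\prod_{c\in C(f)}(z-c)$ and a de Branges--based estimate of the product of distances from $z_0$ to the critical points, rather than around the derivative of the B\"ottcher conjugacy, precisely to avoid the decoupling problem above; if you want to salvage your route, you must at least replace the two separate distortion bounds by an estimate of the single quantity $\psi'(w^d)/\psi'(w)$.
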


\begin{lemma}\label{th:Buffchordal}
For every $n\in\bN$ and every $\lambda\in M_f$,
\begin{gather*}
\log\biggl(\sup_{z\in\bP^1}((f_\lambda^n)^\#(z))\biggr)
\le (2\log d)n
+\frac{4}{d-1}\cdot\sup_{z\in\bC}
\biggl|\log\frac{[z,\infty]^d}{[f_\lambda(z),\infty]}\biggr|.
\end{gather*}
\end{lemma}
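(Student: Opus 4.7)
The plan is to reduce the chordal estimate to Buff's polynomial bound (Theorem~\ref{th:deBranges}) applied to the iterate $f_\lambda^n$, and then to convert the resulting pointwise bound on $|(f_\lambda^n)'|$ into a bound on the chordal derivative by using Lemma~\ref{th:locallybounded} to swap the Green function with $-\log[\cdot,\infty]$.

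First, I would note that since $K_{f_\lambda^n}=K_{f_\lambda}$, the corresponding Green functions agree, $g_{f_\lambda^n}=g_{f_\lambda}$ on $\bC$. Because $f_\lambda'(z)=dz^{d-1}$, the only critical point of $f_\lambda$ in $\bC$ is $c_0(\lambda)=0$, and the critical points of $f_\lambda^n$ in $\bC$ are exactly those $z\in\bC$ for which $f_\lambda^j(z)=0$ for some $0\le j\le n-1$. For such a $z$ and any $\lambda\in M_f$, one has $f_\lambda^n(z)=f_\lambda^{n-j}(0)\in K_{f_\lambda}$, so $d^ng_{f_\lambda}(z)=g_{f_\lambda}(f_\lambda^n(z))=0$; that is, every critical point of $f_\lambda^n$ in $\bC$ lies in $K_{f_\lambda}$. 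Consequently, every $z\in\bC$ satisfies the hypothesis of Theorem~\ref{th:deBranges} applied to $f_\lambda^n$ (which has degree $d^n$), yielding
\[\log|(f_\lambda^n)'(z)|\le 2n\log d+(d^n-1)g_{f_\lambda}(z)\quad\text{for every }z\in\bC.\]

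Second, using the identity $1+|z|^2=[z,\infty]^{-2}$ I would rewrite the chordal derivative as $(f_\lambda^n)^\#(z)=|(f_\lambda^n)'(z)|\cdot[f_\lambda^n(z),\infty]^2/[z,\infty]^2$, so that
\[\log(f_\lambda^n)^\#(z)=\log|(f_\lambda^n)'(z)|-2\log[z,\infty]+2\log[f_\lambda^n(z),\infty].\]
Writing $B_\lambda:=\sup_{z\in\bC}\bigl|\log([z,\infty]^d/[f_\lambda(z),\infty])\bigr|$, Lemma~\ref{th:locallybounded} gives $|g_{f_\lambda}(w)+\log[w,\infty]|\le B_\lambda/(d-1)$ for every $w\in\bC$. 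Applying this at $w=z$ and at $w=f_\lambda^n(z)$, and combining $g_{f_\lambda}(f_\lambda^n(z))=d^ng_{f_\lambda}(z)$, I would get
\[-2\log[z,\infty]+2\log[f_\lambda^n(z),\infty]\le -2(d^n-1)g_{f_\lambda}(z)+\frac{4B_\lambda}{d-1}.\]
Adding this to the first display, the $(d^n-1)g_{f_\lambda}(z)$ contributions cancel and what remains is $-(d^n-1)g_{f_\lambda}(z)\le 0$, so that $\log(f_\lambda^n)^\#(z)\le 2n\log d+4B_\lambda/(d-1)$ on $\bC$. Since $f_\lambda^n$ is a polynomial of degree $d^n>1$, its chordal derivative vanishes at $\infty$, so taking the supremum over $\bP^1$ yields the claim.

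The steps I expect to require the most care are (i) the verification that Buff's hypothesis applies at \emph{every} $z_0\in\bC$, which relies on unicriticality together with total invariance of $K_{f_\lambda}$ to place all critical points of $f_\lambda^n$ in $K_{f_\lambda}$ when $\lambda\in M_f$; and (ii) the bookkeeping that makes the coefficients of $g_{f_\lambda}(z)$ from Buff's estimate and from the conformal factor $[f_\lambda^n(z),\infty]^2/[z,\infty]^2$ precisely cancel, which is exactly what turns the naive exponent of size $d^n$ into the linear $(2\log d)n$ appearing in the statement.
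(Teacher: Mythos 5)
Your proof is correct and takes essentially the same route as the paper's: apply Buff's estimate (Theorem~\ref{th:deBranges}) to the iterate $f_\lambda^n$, then absorb the resulting $(d^n-1)g_{f_\lambda}(z)$ term into the conformal factor $[f_\lambda^n(z),\infty]^2/[z,\infty]^2$ via the functional equation for $g_{f_\lambda}$ and the bound of Lemma~\ref{th:locallybounded}. Your explicit verification that all finite critical points of $f_\lambda^n$ lie in $K_{f_\lambda}$ when $\lambda\in M_f$ (so Buff's hypothesis holds at every $z_0\in\bC$) is a detail the paper leaves implicit, and is a welcome addition.
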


\begin{proof}
 For every $n\in\bN$, every $\lambda\in M_f$, and every $z\in\bC$,
 by Theorem \ref{th:deBranges}, we have
 $|(f_\lambda^n)'(z)|\le 
 (d^n)^2e^{(d^n-1)g_{f_\lambda}(z)}$, and
 by the definition \eqref{eq:Greendynamical} of $g_{f_\lambda}$, we have
 $0\le(d^n-1)g_{f_\lambda}(z)=g_{f_\lambda}(f_\lambda^n(z))-g_{f_\lambda}(z)$,
 so that
 \begin{align*}
 (f_\lambda^n)^\#(z)
  =&|(f_\lambda^n)'(z)|\cdot
 \frac{[f_\lambda^n(z),\infty]^2}{[z,\infty]^2}\\
 \le& 
 d^{2n}e^{g_{f_\lambda}(f_\lambda^n(z))-g_{f_\lambda}(z)}
 \cdot e^{2(\log[f_\lambda^n(z),\infty]-\log[z,\infty])}\\
\le& d^{2n}\cdot e^{2(g_{f_\lambda}(f_\lambda^n(z))+\log[f_\lambda^n(z),\infty])
 -2(g_{f_\lambda}(z)+\log[z,\infty])
}\\
\le & d^{2n}\cdot e^{4\sup_{\bC}|g_{f_\lambda}+\log[\cdot,\infty]|}.
\end{align*}
This with \eqref{eq:Green} completes the proof.
\end{proof}

Recalling the latter half of Lemma \ref{th:locallybounded}, we can set
\begin{gather*}
 C_{B_f}:=\sup_{(\lambda,z)\in B_f\times\bC}\left|\log\frac{[z,\infty]^d}{[f_\lambda(z),\infty]}\right|<\infty.
\end{gather*}
Then for every $n\in\bN$, by Lemmas \ref{th:lower} and \ref{th:Buffchordal}, we have 
\begin{multline*}
\inf_{B_f}\log|F_n|\\
\ge-\frac{1}{d-1}\Bigl((d+1)\log 2+(2\log d)(n-1)+\frac{4C_{B_f}}{d-1}\Bigr)+
\log\bigl(\sqrt{2}-1\bigr).
\end{multline*}
On the other hand, for every $n\in\bN$ and every $\lambda\in M_f$, 
by Buff \cite[Theorem 1]{Buff03}, we also have
$F_n(\lambda)=f_\lambda^n(c_0(\lambda))\in K_{f_\lambda}\subset\bD(2)$.
Hence for every $n\in\bN$, we have the following uniform estimate
\begin{multline}
\sup_{B_f}\bigl|\log|F_n|\bigr|\\
\le\frac{1}{d-1}\Bigl((d+1)\log 2+(2\log d)(n-1)+\frac{4C_{B_f}}{d-1}
+(d-1)\log\bigl(\sqrt{2}+1\bigr)\Bigr)=:t_n.
\label{eq:lowerconcrete}
\end{multline}

Now let us recall the following classical theorem from the Nevanlinna theory;
for a modern formulation, see \cite{Weitsman72}.

\begin{theorem}[Selberg {\cite[p.\ 311]{Selberg44}}]\label{th:Selberg}
Let $V$ be a bounded and at most finitely connected domain in $\bC$ whose
boundary components are piecewise real analytic Jordan closed curves, 
so that for every $y\in V$, 
the Green function $G_V(\cdot,y)$ on $V$ with pole $y$ exists and
extends continuously to $\bC$ by setting $\equiv 0$ on $\bC\setminus V$.
If $V$ is in $\bC\setminus\{0\}$, 
then for every $y\in V$ and every $r>0$, setting 
$\theta_V(r):=\int_{\{\theta\in[0,2\pi]:\,re^{i\theta}\in V\}}\rd\theta\in[0,2\pi]$, 
we have
\begin{gather}
 \int_0^{2\pi}G_V(re^{i\theta},y)\frac{\rd\theta}{2\pi}
 \le\min\biggl\{\frac{\pi}{2}\tan\frac{\theta_V(r)}{4},
 \log^+\frac{r}{\inf_{z\in V}|z|}\biggr\}.\label{eq:Selberg}
\end{gather} 
\end{theorem}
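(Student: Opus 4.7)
The plan is to treat the two bounds comprising the minimum separately, after one common reduction. Define $u:=G_V(\cdot,y)$ on $V$ and extend $u\equiv 0$ on $\bC\setminus V$. Under the hypotheses on $\partial V$, the extension $u$ is continuous on $\bC$, harmonic on $V\setminus\{y\}$ with a positive logarithmic pole at $y$, vanishes on $\bC\setminus V$, and is subharmonic on $\bC\setminus\{y\}$ (the distributional Laplacian picks up a nonnegative measure on $\partial V$ coming from the inward normal derivative of $G_V$ along the piecewise analytic boundary). It then suffices to bound the circular mean $\bar u(r):=(2\pi)^{-1}\int_0^{2\pi}u(re^{i\theta})\rd\theta$.

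For the bound $\bar u(r)\le\log^+(r/\inf_V|z|)$, set $\rho:=\inf_V|z|>0$. For $r<\rho$ the circle $\{|z|=r\}$ is disjoint from $V$, so $\bar u(r)=0$ and the estimate is trivial. For $r\ge\rho$, pick any $R>\sup_V|z|$ and use the inclusion $V\subset A_R:=\{\rho<|z|<R\}$, which gives $G_V(z,y)\le G_{A_R}(z,y)$ on $V\times V$ by domain monotonicity of Green functions. By rotational symmetry of $A_R$, the mean $\bar G_{A_R}(r)$ is piecewise linear in $\log r$ with the unique corner at $\log|y|$ and zero boundary values at $\log\rho$ and $\log R$; the explicit 1D solution, together with $R\to\infty$, yields $\bar u(r)\le\log(\min(r,|y|)/\rho)\le\log^+(r/\rho)$.

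For the bound $\bar u(r)\le(\pi/2)\tan(\theta_V(r)/4)$, the approach is circular symmetrization. Let $V^*$ be the domain obtained from $V$ by circular (Steiner-type) symmetrization about the positive real axis, so that $V^*\cap\{|z|=s\}$ is the open arc of length $\theta_V(s)$ symmetric about $\{\arg z=0\}$, and set $y^*:=|y|>0$. By the classical P\'olya--Szeg\H{o} principle for Green functions under circular symmetrization, one has $\bar u(r)\le(2\pi)^{-1}\int_0^{2\pi}G_{V^*}(re^{i\theta},y^*)\rd\theta$, reducing the problem to the rotationally simpler $V^*$. The tangent factor arises from the M\"obius map $z\mapsto i(r-z)/(r+z)$, under which the arc $\{|z|=r,\,|\arg z|<\theta_V(r)/2\}$ is sent to the real segment $(-\tan(\theta_V(r)/4),\tan(\theta_V(r)/4))$ in the real line; in the resulting half-plane picture $u$ is harmonic and vanishes outside this segment of length $2\tan(\theta_V(r)/4)$, and an explicit estimate of the Poisson integral of such a boundary datum produces the numerical constant $\pi/2$.

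The principal technical obstacle is the symmetrization step: because $V$ is only assumed finitely connected, the monotonicity of the circular mean of $G_V(\cdot,y)$ under circular symmetrization must be justified with care. A rigorous route is to first regularize $V$ to a finite union of smooth Jordan regions, perform the symmetrization via the coarea decomposition of the level sets of $G_V$ (checking that the Dirichlet energy decreases while the circular mean at each radius increases), and then pass to the limit using continuity of the Green function in its domain. An alternative closer to the original argument of \cite{Selberg44} bypasses symmetrization and instead majorizes $u$ on each circle by the harmonic measure of the arc $V\cap\{|z|=r\}$ in a half-plane model; the tangent factor again emerges from the same M\"obius identification above, while the remaining supremum factor is controlled by the two-constants theorem.
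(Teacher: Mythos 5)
The paper offers no proof of Theorem \ref{th:Selberg}: it is quoted from Selberg's 1946 paper (with \cite{Weitsman72} as a modern reference) and used as a black box in Claim \ref{th:small}, so your proposal must stand on its own. Your treatment of the second bound, $\int_0^{2\pi}G_V(re^{i\theta},y)\frac{\rd\theta}{2\pi}\le\log^+(r/\inf_{z\in V}|z|)$, via comparison with the annulus $\{\rho<|z|<R\}$ and the explicit piecewise-linear-in-$\log r$ computation of the circular means of the annulus Green function, is correct.

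The tangent bound is where the substance of Selberg's inequality lies, and there your argument has two genuine gaps. First, the claim that circular symmetrization does not decrease the circular means of $G_V(\cdot,y)$ at \emph{every} radius is not a ``classical P\'olya--Szeg\H{o} principle'': for Green functions of multiply connected plane domains this is Baernstein's star-function theorem, and your proposed justification (regularize, use the coarea decomposition, check that ``the Dirichlet energy decreases while the circular mean at each radius increases'') is circular --- the decrease of Dirichlet energy yields comparisons of capacities and of values at the pole, not the radius-by-radius comparison of circular means, which is precisely the statement being invoked. Second, and decisively, the final step is not an argument: after the M\"obius map, the restriction of $G_{V^*}(\cdot,y^*)$ to $\{|z|=r\}$ is an \emph{unknown} nonnegative function supported on a segment of length $2\tan(\theta_V(r)/4)$, and no ``estimate of the Poisson integral of such a boundary datum'' can bound its integral, because every property you have recorded at that point (positive and harmonic in the domain, vanishing outside it, supported on that segment) is invariant under replacing $G$ by $\lambda G$ with $\lambda>1$, while the bound $\frac{\pi}{2}\tan(\theta_V(r)/4)$ is not. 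Any correct proof must use the normalization of the Green function --- unit Riesz mass at the pole, equivalently total harmonic measure one swept onto $\partial V$ --- and your outline never does; the ``two-constants theorem'' alternative you mention fails for the same reason, since it needs an a priori bound on $\sup_{|z|=r}G_V(\cdot,y)$. A route that does use the normalization: interpret $\frac{1}{2\pi}\int_0^{2\pi}G_V(re^{i\theta},y)\rd\theta$ as the Green potential at $y$ of normalized arclength on $V\cap\{|z|=r\}$, move $y$ onto that arc by the maximum principle for potentials, enlarge $V$ to $\bC\setminus J$ with $J:=\{|z|=r\}\setminus V$, note that the resulting circular mean equals $G_{\bC\setminus J}(y,\infty)$, and estimate the latter through the equilibrium measure of closed subsets of the circle of angular measure $2\pi-\theta_V(r)$.
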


Let $H_1$ be the component of $H_f$ containing $0$ 
and set
\begin{multline*}
 C_0:=\pi
+\int_0^\infty\frac{2r}{(1+r^2)^2}
\log^+\frac{r}{\sup\{t>0:\bD(t)\subset H_1\}}\rd r\\
+\int_{H_1}G_{H_1}(\cdot,0)\omega<\infty.
\end{multline*}
Fix $n\in\bN$. Recall that $\deg F_n=d^{n-1}$.

\begin{claim}\label{th:small}
\begin{gather*}
  \int_{F_n^{-1}(\bD(e^{-t_n}))}\bigl|\log|F_n|-d^{n-1}\cdot g_{I_{c_0}}
 \bigr|\omega
 \le\omega(F_n^{-1}(\bD(e^{-t_n})))t_n+C_0.
\end{gather*}
\end{claim}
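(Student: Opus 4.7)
The plan is first to restrict the integration domain to $M_f$. On $I_{c_0}$ the function $u_n:=\log|F_n|-d^{n-1}g_{I_{c_0}}$ is harmonic (since $F_n$ has no zero on $I_{c_0}$) and extends continuously across $\infty$ (both terms are asymptotic to $d^{n-1}\log|\lambda|$); on $B_f=\partial I_{c_0}$ we have $|u_n|=|\log|F_n||\le t_n$ by \eqref{eq:lowerconcrete}, since $g_{I_{c_0}}\equiv 0$ there. The maximum principle on $I_{c_0}$ then forces $|u_n|\le t_n$ throughout, so $\log|F_n|\ge d^{n-1}g_{I_{c_0}}-t_n>-t_n$ wherever $g_{I_{c_0}}>0$. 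Hence $F_n^{-1}(\bD(e^{-t_n}))\subset M_f$; on this set $g_{I_{c_0}}\equiv 0$ and $|F_n|<e^{-t_n}<1$, so the integrand equals $-\log|F_n|\ge 0$.

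Next I would examine each connected component $U$ of $F_n^{-1}(\bD(e^{-t_n}))$. By \eqref{eq:lowerconcrete}, $|F_n|\ge e^{-t_n}$ on $B_f$, so $U$ avoids $B_f$ and is contained in a connected component of the interior of $M_f$; as $F_n|_U\colon U\to\bD(e^{-t_n})$ is proper (and hence surjective), $U$ contains at least one zero of $F_n$, which is a superattracting parameter. This forces the containing component of the interior of $M_f$ to be a hyperbolic component $H^{(U)}$, and the only zero of $F_n$ there is its unique center $c_U=\phi_{H^{(U)}}^{-1}(0)$ (notation of Subsection~\ref{sec:DH}); hence $F_n|_U$ has degree one, i.e.\ is a conformal isomorphism. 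Writing $F_n=e^{-t_n}\phi_U$ with $\phi_U\colon U\to\bD$ conformal and $\phi_U(c_U)=0$ gives $-\log|\phi_U|=G_U(\cdot,c_U)$ (the Green function of $U$ with pole $c_U$), so
\begin{gather*}
-\log|F_n|=t_n+G_U(\cdot,c_U)\quad\text{on }U.
\end{gather*}
Summing over components reduces the Claim to
\begin{gather*}
\sum_U\int_U G_U(\cdot,c_U)\,\omega\le C_0.
\end{gather*}

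For this Green-function sum I would isolate the component $U_1\ni 0$. Since the only zero of $F_n$ in $H_1$ is $0$ itself, $U_1\subset H_1$ and $c_{U_1}=0$; by domain monotonicity of the Green function, $\int_{U_1}G_{U_1}(\cdot,0)\,\omega\le\int_{H_1}G_{H_1}(\cdot,0)\,\omega$. For each $U\ne U_1$, the hyperbolic component $H^{(U)}$ is distinct from $H_1$, so $U\cap H_1=\emptyset$, and therefore $U\subset\bC\setminus\bD(s_0)$ with $s_0:=\sup\{t>0:\bD(t)\subset H_1\}$; this makes Selberg's theorem (Theorem~\ref{th:Selberg}) applicable to each such $U\subset\bC\setminus\{0\}$. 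At every radius $r>0$, disjointness of the components gives $\sum_U\theta_U(r)\le 2\pi$, so at most one $U$ can satisfy $\theta_U(r)>\pi$; I would handle that single (possibly nonexistent) large-angle component with the $\log^+(r/s_0)$-form of Selberg, and for the rest use the convexity estimate $\frac{\pi}{2}\tan(\theta/4)\le\theta/2$ on $[0,\pi]$ (valid since $\tan(\pi/4)=1$). These combine to
\begin{gather*}
\sum_{U\ne U_1}\int_0^{2\pi}G_U(re^{i\theta},c_U)\frac{\rd\theta}{2\pi}\le\pi+\log^+\frac{r}{s_0},
\end{gather*}
and integration against $\omega$ in polar coordinates produces exactly $\pi+\int_0^\infty\frac{2r}{(1+r^2)^2}\log^+(r/s_0)\,\rd r$, which together with the $U_1$ contribution assembles into $C_0$.

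The main obstacle is the summation in the last step: a component-by-component application of Selberg's theorem would accumulate $O(d^{n-1})$ copies of the single-component bound, giving an estimate that blows up in $n$. The saving is that the disjointness constraint $\sum_U\theta_U(r)\le 2\pi$ allows at most one component to be ``large'' at each fixed $r$, so the $\frac{\pi}{2}\tan$-form of Selberg's estimate becomes essentially linear in $\theta_U(r)$ on the remaining components and sums to a quantity independent of $n$.
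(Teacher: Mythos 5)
Your proposal is correct and follows essentially the same route as the paper's proof: conformality of $F_n$ on each component of $F_n^{-1}(\bD(e^{-t_n}))$ (via simplicity of the zeros and uniqueness of centers of hyperbolic components), conformal invariance of the Green function, domain monotonicity for the component containing $0$, and Selberg's theorem combined with the disjointness constraint $\sum_U\theta_U(r)\le 2\pi$ so that the $\tan$-form contributions sum to at most $\pi$, with a single $\log^+(r/s_0)$ term reserved for the one possibly large-angle component. The only cosmetic difference is that you justify $F_n^{-1}(\bD(e^{-t_n}))\subset H_f\setminus I_{c_0}$ by a maximum-principle argument on $I_{c_0}$, where the paper simply invokes the description of $H_f$ from Subsection \ref{sec:DH}.
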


\begin{proof}
By \eqref{eq:lowerconcrete}, we have $\inf_{B_f}|F_n|\ge e^{-t_n}$.
Let $\mathcal{F}$ be 
the family of all components of $F_n^{-1}(\bD(e^{-t_n}))$,
so that $\#\mathcal{F}\le d^{n-1}$. 
By the description of $H_f$ in Subsection \ref{sec:DH},
every $V\in\mathcal{F}$ is a piecewise real analytic
Jordan domain in $H_f\setminus I_{c_0}$ and,
since any zero of $F_n$ is also simple,
for every $V\in\mathcal{F}$, the restriction $F_n|V:V\to\bD(e^{-t_n})$ is 
conformal. For every $V\in\mathcal{F}$, set $\lambda_V:=(F_n|V)^{-1}(0)$.
Let $V_0$ be the element of $\mathcal{F}$ containing $0$.
Recall the notation in Theorem \ref{th:Selberg}. 
For every $V\in\mathcal{F}$,
by the conformal invariance of the Green functions, we have
\begin{gather*}
 \log\frac{e^{-t_n}}{|F_n|}=G_{\bD(e^{-t_n})}(F_n,0)=G_V(\cdot,\lambda_V)\quad\text{on }V.
\end{gather*}
For every $r>0$, fixing such $V_r\in\cF\setminus\{V_0\}$ 
that for every $V\in\mathcal{F}\setminus\{V_0\}$,
$\theta_{V_r}(r)\ge\theta_V(r)$ (so in particular that
for every $V\in\mathcal{F}\setminus\{V_0,V_r\}$, $\theta_V(r)\in[0,\pi]$ 
since $2\pi\ge\theta_{V_r}(r)+\theta_V(r)\ge 2\theta_V(r)\ge 0$),
we have
\begin{align*}
&\sum_{V\in\mathcal{F}}
\int_0^{2\pi}G_V(re^{i\theta},\lambda_V)\frac{\rd\theta}{2\pi}\\
=&\sum_{V\in\mathcal{F}\setminus\{V_0\}}
\int_0^{2\pi}G_V(re^{i\theta},\lambda_V)\frac{\rd\theta}{2\pi}
+\int_0^{2\pi}G_{V_0}(re^{i\theta},0)\frac{\rd\theta}{2\pi}\\
\le&\biggl(\sum_{V\in\mathcal{F}\setminus\{V_0,V_r\}}\Bigl(\frac{\pi}{2}\tan\frac{\theta_V(r)}{4}\Bigr)
+\log^+\frac{r}{\inf_{z\in V_r}|z|}\biggr)
+\int_0^{2\pi}G_{H_1}(re^{i\theta},0)\frac{\rd\theta}{2\pi}\\
\le&\frac{\pi}{2}\cdot\sum_{V\in\mathcal{F}\setminus\{V_0,V_r\}}\frac{\theta_V(r)}{\pi}+\log^+\frac{r}{\sup\{t>0:\bD(t)\subset H_1\}}+\int_0^{2\pi}G_{H_1}(re^{i\theta},0)\frac{\rd\theta}{2\pi}\\
\le&\frac{\pi}{2}\cdot\frac{2\pi}{\pi}+\log^+\frac{r}{\sup\{t>0:\bD(t)\subset H_1\}}
+\int_0^{2\pi}G_{H_1}(re^{i\theta},0)\frac{\rd\theta}{2\pi},
\end{align*}
where the first inequality is 
by \eqref{eq:Selberg} and the monotonicity of the Green functions, and
the second inequality is by $\theta_V(r)\in[0,\pi]$ for every $V\in\cF\setminus\{V_0,V_r\}$.
Hence, since $t_n\ge 0$, we have
\begin{multline*}
\int_{F_n^{-1}(\bD(e^{-t_n}))}\bigl|\log|F_n|\bigr|\omega
=\int_{F_n^{-1}(\bD(e^{-t_n}))}(-\log|F_n|)\omega\\
=\omega(F_n^{-1}(\bD(e^{-t_n})))t_n+
\int_0^\infty\frac{2r\rd r}{(1+r^2)^2}
\sum_{V\in\mathcal{F}}\int_0^{2\pi}G_V(re^{i\theta},\lambda_V)\frac{\rd\theta}{2\pi}\\
\le\omega(F_n^{-1}(\bD(e^{-t_n})))t_n+C_0,
\end{multline*}
which completes the proof.
\end{proof}

\begin{claim}\label{th:harmonic}
$\sup_{\bC\setminus F_n^{-1}(\bD(e^{-t_n}))}\bigl|\log|F_n|-d^{n-1}\cdot g_{I_{c_0}}\bigr|
\le t_n$. 
\end{claim}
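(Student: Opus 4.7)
The plan is to split the closed set $\bC\setminus F_n^{-1}(\bD(e^{-t_n}))$ into two pieces — the piece inside $M_f$ and the domain $I_{c_0}\cup\{\infty\}\subset\bP^1$ — using the inclusion $F_n^{-1}(\bD(e^{-t_n}))\subset H_f\setminus I_{c_0}\subset M_f$ already observed in the proof of Claim \ref{th:small}, and to handle each by elementary means.

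On $M_f\setminus F_n^{-1}(\bD(e^{-t_n}))$, I would exploit that $g_{I_{c_0}}\equiv 0$ on $M_f$, so the quantity reduces to $|\log|F_n||$. The lower bound $\log|F_n|\ge -t_n$ is immediate from the defining condition $|F_n|\ge e^{-t_n}$, while the upper bound $\log|F_n|\le\log 2$ comes from Buff \cite[Theorem 1]{Buff03} (already invoked just above), which gives $F_n(\lambda)=f_\lambda^n(c_0(\lambda))\in K_{f_\lambda}\subset\bD(2)$ for every $\lambda\in M_f$. Since the summand $(d+1)(\log 2)/(d-1)$ appearing in the definition \eqref{eq:lowerconcrete} of $t_n$ is already $\ge\log 2$ for every $d>1$ and the remaining summands are nonnegative, $t_n\ge\log 2$, and these two bounds combine to $|\log|F_n||\le t_n$ on this piece.

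On $I_{c_0}\cup\{\infty\}\subset\bP^1$ I would apply the maximum principle to $u_n:=\log|F_n|-d^{n-1}g_{I_{c_0}}$. Here $\log|F_n|$ is harmonic on $I_{c_0}$ because every zero of $F_n$ lies in $H_f\setminus I_{c_0}\subset M_f$, and $g_{I_{c_0}}$ is harmonic on $I_{c_0}\setminus\{\infty\}$. The point is that the logarithmic growth rates at $\infty$ match: since $F_n\in\bZ[\lambda]$ is monic of degree $d^{n-1}$, $\log|F_n(\lambda)|-d^{n-1}\log|\lambda|\to 0$ as $\lambda\to\infty$, and since $g_{I_{c_0}}$ is the Green function of $I_{c_0}$ with pole $\infty$ one has $g_{I_{c_0}}(\lambda)-\log|\lambda|=O(1)$ near $\infty$; hence $u_n$ is bounded near $\infty$ and extends harmonically across it by a removable-singularity argument. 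On the boundary $B_f=\partial(I_{c_0}\cup\{\infty\})$ in $\bP^1$ we have $g_{I_{c_0}}\equiv 0$, so $u_n|_{B_f}=\log|F_n||_{B_f}$, and \eqref{eq:lowerconcrete} gives $|u_n|\le t_n$ on $B_f$. The maximum principle then propagates this bound to all of $I_{c_0}\cup\{\infty\}$.

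The two cases together cover $\bC\setminus F_n^{-1}(\bD(e^{-t_n}))$. The only (mild) subtlety will be verifying the harmonic extension of $u_n$ across $\infty$, which hinges on matching the top-order logarithmic growth of $\log|F_n|$ and $d^{n-1}g_{I_{c_0}}$; everything else is either the maximum principle or a direct estimate.
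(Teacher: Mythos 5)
Your proof is correct, and its treatment of $I_{c_0}\cup\{\infty\}$ (harmonic extension of $u_n=\log|F_n|-d^{n-1}g_{I_{c_0}}$ across $\infty$ by matching the logarithmic growth, then the maximum principle with boundary values on $B_f$ controlled by \eqref{eq:lowerconcrete}) is exactly the paper's. Where you diverge is on the piece $M_f\setminus F_n^{-1}(\bD(e^{-t_n}))$: the paper applies the maximum principle a second time, to $\pm\log|F_n|$, which is harmonic on the interior of that set, with boundary contained in $B_f\cup F_n^{-1}(\partial\bD(e^{-t_n}))$ where $|\log|F_n||\le t_n$; you instead give a purely pointwise two-sided bound, getting $\log|F_n|\ge -t_n$ for free from the defining condition of the set and $\log|F_n|\le\log 2\le \frac{(d+1)\log 2}{d-1}\le t_n$ from Buff's inclusion $K_{f_\lambda}\subset\bD(2)$, which the paper invokes only on $B_f$ when establishing \eqref{eq:lowerconcrete} but which holds on all of $M_f$. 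Your variant is more elementary on that piece -- it needs no harmonicity or boundary analysis there, only the (trivially verified) inequality $t_n\ge\log 2$ -- while the paper's version is self-contained in the sense that it reuses only the single boundary estimate \eqref{eq:lowerconcrete} already in hand. Both yield the stated bound $t_n$ with no loss.
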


\begin{proof}
By the description of $H_f$ in Subsection \ref{sec:DH},
the function $\log|F_n|-d^{n-1}\cdot g_{I_{c_0}}$
is not only harmonic on $I_{c_0}$ but also bounded around $\infty$
so, by the removable singularity theorem for subharmonic functions twice,
extends {\itshape harmonically} to $I_{c_0}\cup\{\infty\}$.
Applying the maximum principle to 
this harmonic extension on $I_{c_0}\cup\{\infty\}$ twice,
by $g_{I_{c_0}}\equiv 0$ on $M_f$ and \eqref{eq:lowerconcrete}, we have
$\sup_{I_{c_0}}\bigl|\log|F_n|-d^{n-1}\cdot g_{I_{c_0}}\bigr|
\le\sup_{B_f}\bigl|\log|F_n|\bigr|\le t_n$ 
(cf.\ \cite[the proof of Lemma 4.1]{GV15}). 
Similarly, 
applying the maximum principle twice to the restriction of $\log|F_n|$
on $M_f\setminus F_n^{-1}(\bD(e^{-t_n}))$, which is harmonic on the
interior of $M_f\setminus F_n^{-1}(\bD(e^{-t_n}))$,
by $g_{I_{c_0}}\equiv 0$ on $M_f$ and \eqref{eq:lowerconcrete},
we have $\sup_{M_f\setminus F_n^{-1}(\bD(e^{-t_n}))}
\bigl|\log|F_n|-d^{n-1}\cdot g_{I_{c_0}}\bigr|
\le\sup_{B_f\cup F_n^{-1}(\partial\bD(e^{-t_n}))}\bigl|\log|F_n|\bigr|\le t_n$.
Now the proof is complete.
\end{proof}

\begin{remark}
 The proof of Claim \ref{th:harmonic} is independent of
the possibility of the existence of a queer component
of the interior of $M_f$.
\end{remark}

By Claims \ref{th:small} and \ref{th:harmonic}, 
we have the following $L^1(\omega)$ estimate
\begin{multline}
\int_{\bP^1}\bigl|\log|F_n|-d^{n-1}\cdot g_{I_{c_0}}\bigr|\omega\\
\le\bigl(\omega(F_n^{-1}(\bD(e^{-t_n})))t_n+C_0\bigr)
+\omega(\bC\setminus F_n^{-1}(\bD(e^{-t_n})))t_n
= t_n+C_0,
\label{eq:superattconcrete}
\end{multline}
so \eqref{eq:lower} holds.

Recalling \eqref{eq:bifcurrent}, we also have
$(d-1)F_n^*\delta_0-d^n\cdot T_f
=(d-1)\cdot\rd\rd^c(\log|F_n|-d^{n-1}\cdot g_{I_{c_0}})$
on $\bP^1$,
so that by Green's theorem, for every $\phi\in C^2(\bP^1)$, the estimate \eqref{eq:superattconcrete} yields
\begin{gather}
 \left|\int_{\bP^1}\phi\rd\left((d-1)\cdot F_n^*\delta_0-d^n\cdot T_f\right)\right|
 \le\biggl(\sup_{\bP^1}\biggl|\frac{\rd\rd^c\phi}{\omega}\biggr|\biggr)
 \cdot(d-1)(t_n+C_0),
\tag{\ref{eq:superattcurrent}$'$}\label{eq:superattproximity}
\end{gather}
so \eqref{eq:superattcurrent} holds. Now the proof of Theorem \ref{th:linear} is complete. \qed

\section{Proof of Theorem \ref{th:nonrepelling}}
\label{sec:nonrepellingproof}
Let $f:\bC\times\bP^1\to\bP^1$ be the unicritical polynomials family
of degree $d>1$ defined as \eqref{eq:unicritical}. Recall the definitions 
(and properties) of
$\Phi_{f,n}^*(\lambda,z)\in\bZ[\lambda,z]$,
$p_{f,n}^*(\lambda,w)\in\bZ[\lambda,z]$,
and $\Fix_f^{**}(\lambda,n)$
in Subsection \ref{sec:nonrepelling}.
For every $n\in\bN$, it would be convenient to set
\begin{gather*}
 P_n^*(\lambda,w)=P_{f,n}^*(\lambda,w):=\frac{p_{f,n}^*(\lambda,w)}{d^{\nu(n)}}
\in\bQ[\lambda,w],
\end{gather*}
so that for every $w\in\bC$, $P_n^*(\lambda,w)\in\bC[\lambda]$ is {\itshape monic}.

\begin{lemma}\label{th:preparatory}
For every $n\in\bN$ and every $\lambda\in\bC$, we have
\begin{multline}
 P_n^*(\lambda,0)
=\bigl((-1)^{\nu(n)}\cdot\Phi_{f,n}^*(\lambda,0)\bigr)^{d-1}\\
=\biggl((-1)^{\nu(n)}\cdot\prod_{m\in\bN:\, m|n}F_m(\lambda)^{\mu(n/m)}\biggr)^{d-1}\label{eq:factorization}
\end{multline}
$($up to multiplication in $n$-th roots of unity$)$.
For every $n>1$, we have $0\not\in(P_n^*(\cdot,0))^{-1}(0)$.
For every $n\in\bN$ and every $\lambda\in\bC$, 
if $\lambda\in(P_n^*(\cdot,0))^{-1}(0)$, then
$(c_0(\lambda)=)0\in\Fix_f^*(\lambda,n)$ 
and $\lambda$ is a zero of $P_n^*(\cdot,0)$ of the order $d-1$.
\end{lemma}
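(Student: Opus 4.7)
For (1), I would compute $p_{f,n}^*(\lambda,0)$ directly from its defining product. By definition, $p_{f,n}^*(\lambda,0)^n=\prod_{z\in\Fix_f^{**}(\lambda,n)}(f_\lambda^n)'(z)$ (with multiplicity). Using $f_\lambda'(z)=dz^{d-1}$ and the chain rule, $(f_\lambda^n)'(z)=d^n\prod_{j=0}^{n-1}(f_\lambda^j(z))^{d-1}$. Since $f_\lambda$ acts on $\Fix_f^{**}(\lambda,n)$ as a multiplicity-preserving permutation (its cycles exhaust this multiset, and the multiplier along any cycle is constant), for every $j$ one has $\prod_{z\in\Fix_f^{**}(\lambda,n)}f_\lambda^j(z)=\prod_{z\in\Fix_f^{**}(\lambda,n)}z$, so the product becomes $d^{n\nu(n)}\bigl(\prod_{z\in\Fix_f^{**}(\lambda,n)}z\bigr)^{n(d-1)}$. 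Vieta's formula applied to the monic polynomial $\Phi_{f,n}^*(\lambda,\cdot)\in\bC[z]$ of degree $\nu(n)$ gives $\prod_{z\in\Fix_f^{**}(\lambda,n)}z=(-1)^{\nu(n)}\Phi_{f,n}^*(\lambda,0)$. Taking the $n$-th root (well-defined up to the $n$-th-root-of-unity ambiguity already built into the definition of $p_{f,n}^*$) and dividing by $d^{\nu(n)}$ yields the first equality in (1); the second follows by setting $z=0$ in the defining product for $\Phi_{f,n}^*(\lambda,z)$ and using $f_\lambda^m(0)=F_m(\lambda)$.

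For (2), at $\lambda=0$ the map is $f_0(z)=z^d$, and any periodic point of exact period $m$ satisfies $z^{d^m}=z$, so either $z=0$ (the superattracting fixed point) or $|z|=1$. In the latter case a direct chain-rule computation gives the multiplier $(f_0^m)'(z)=d^m$, whose modulus exceeds $1$ and hence is not a root of unity; so case (ii) in the definition of formally exact period cannot occur at $\lambda=0$, yielding $\Fix_f^{**}(0,n)=\Fix_f^*(0,n)$, and for $n>1$ this set excludes the fixed point $0$. Therefore $\Phi_{f,n}^*(0,0)=(-1)^{\nu(n)}\prod_{z\in\Fix_f^{**}(0,n)}z\ne 0$, and (1) gives $P_n^*(0,0)\ne 0$.

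For (3), applying M\"obius inversion to the dynatomic product yields the identity $f_\lambda^n(z)-z=\prod_{m\in\bN:\,m|n}\Phi_{f,m}^*(\lambda,z)$, whose evaluation at $z=0$ gives $F_n(\lambda)=\prod_{m\in\bN:\,m|n}\Phi_{f,m}^*(\lambda,0)$. If $P_n^*(\lambda,0)=0$, then by (1) $\Phi_{f,n}^*(\lambda,0)=0$, i.e., $0\in\Fix_f^{**}(\lambda,n)$; since $0$ is the critical point of $f_\lambda$, the multiplier of any cycle through $0$ vanishes (as the chain rule contains the factor $f_\lambda'(0)=0$), which is not a primitive root of unity, so case (ii) is again excluded and $0\in\Fix_f^*(\lambda,n)$. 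In particular $F_n(\lambda)=0$, and by the Douady--Hubbard theorem recalled in the Introduction $\lambda$ is a \emph{simple} zero of $F_n$. The factorization above then forces $\Phi_{f,n}^*(\cdot,0)$ to have a simple zero at $\lambda$ while $\Phi_{f,m}^*(\lambda,0)\ne 0$ for all proper divisors $m$ of $n$; combined with (1) this shows $P_n^*(\cdot,0)$ has a zero of order exactly $d-1$ at $\lambda$. The one delicate point in the whole argument is the multiplicity bookkeeping in step (1): one must check that $f_\lambda$ acts on $\Fix_f^{**}(\lambda,n)$ as a multiplicity-preserving permutation even at parameters where $\Phi_{f,n}^*(\lambda,\cdot)$ acquires multiple roots from parabolic implosion, and that the $n$-th root used to form $p_{f,n}^*$ can be taken consistently through the computation.
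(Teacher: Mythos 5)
Your proof is correct and follows essentially the same route as the paper's: the chain rule with $f_\lambda'(z)=dz^{d-1}$, the invariance of the multiset $\Fix_f^{**}(\lambda,n)$ under $f_\lambda$, and Vieta applied to the monic dynatomic polynomial give \eqref{eq:factorization}, while the exclusion of case (ii) via the vanishing multiplier of the critical cycle together with the simplicity of the zeros of $F_n$ gives the final assertion. The only divergence is in the second assertion, where the paper deduces $P_n^*(0,0)\neq 0$ from the simplicity of the zero of each $F_m$ at $\lambda=0$ combined with $\sum_{m\in\bN:\,m|n}\mu(n/m)=0$ for $n>1$, whereas you analyze the dynamics of $z\mapsto z^d$ directly; both arguments are valid.
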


\begin{proof}
For every $n\in\bN$ and every $\lambda\in\bC$, 
by the chain rule and the equalities $f_{\lambda}'(z)=d\cdot z^{d-1}$ 
and $\Fix_f^{**}(\lambda,n)=(\Phi_{f,n}^*(\lambda,\cdot))^{-1}(0)$,
we have
\begin{align*}
 (p_{f,n}^*(\lambda,0))^n
 \biggl(=&\prod_{z\in\Fix_f^{**}(\lambda,n)}(f_{\lambda}^n)'(z)\biggr)
 =d^{\nu(n)n}\bigl((-1)^{\nu(n)}\cdot\Phi_{f,n}^*(\lambda,0)\bigr)^{n(d-1)}\\
 =&d^{\nu(n)n}\biggl((-1)^{\nu(n)}\cdot\prod_{m\in\bN:\,m|n}(f_{\lambda}^m(0)-0)^{\mu(n/m)}\biggr)^{n(d-1)},
\end{align*}
which (with the definition of $F_m$) yields \eqref{eq:factorization}. 
For every $m\in\bN$, even by a direct computation, $0$ is 
a simple zero of $F_m$ in $\bC$, so that for every $n>1$, 
$0\not\in(P_n^*(\cdot,0))^{-1}(0)$ by $\sum_{m\in\bN:\,m|n}\mu(n/m)=0$ 
and the latter equality in \eqref{eq:factorization}.
For every $n\in\bN$ and every $\lambda_0\in(P_n^*(\cdot,0))^{-1}(0)$,
by the former equality in \eqref{eq:factorization}, 
we have $(c_0(\lambda_0)=)0\in\Fix_f^{**}(\lambda_0,n)$, which with 
$(f_\lambda^n)'(0)=(f_\lambda^n)'(c_0(\lambda))=0\neq 1$ 
implies even $0\in\Fix_f^*(\lambda_0,n)$.
Then by the latter equality in \eqref{eq:factorization}, 
$\lambda_0$ is a zero of $P_n^*(\cdot,0)$ of order $d-1$
since any zero of $F_n$ is in fact simple.
\end{proof}

Recall the definitions of the sequences
$(\sigma_0(n))$ and $(\sigma_1(n))$ in $\bN$ (in Notation \ref{th:sigma}).

\subsection{Proof of \eqref{eq:superattprimitive}}
For every $n\in\bN$, 
the estimate \eqref{eq:superattconcrete} 
together with \eqref{eq:dynatomicdegree} and \eqref{eq:factorization} 
yields the following $L^1(\omega)$ estimate
\begin{gather}
\int_{\bP^1}\Bigl|\log|P_n^*(\cdot,0)|
-(d-1)\nu(n)\frac{g_{I_{c_0}}}{d}\Bigr|\omega
\le t_n^*+(d-1)C_0\cdot\sigma_0(n),\label{eq:superattprimitiverough}
\end{gather}
where we set 
\begin{multline*}
 t_n^*:=(d-1)\sum_{m\in\bN:\,m|n}t_m=(2\log d)\sigma_1(n)\\
+\Bigl((d+1)\log 2-2\log d+\frac{4C_{B_f}}{d-1}
+(d-1)\log\bigl(\sqrt{2}+1\bigr)\Bigr)\sigma_0(n).
\end{multline*}
Recall that $H_1$ is by definition the component of $H_f$ containing $0$, and set
\begin{align*}
C_0^*
:=&\pi+\int_0^\infty\frac{2r}{(1+r^2)^2}
\log^+\frac{r}{\sup\{t>0:\bD(t)\subset H_1\}}\rd r\\
=&C_0-\int_{H_1}G_{H_1}(\cdot,0)\omega.
\end{align*}
In the rest of this subsection, 
for every $n>1$, we also point out a slightly better estimate
\begin{gather}
\int_{\bP^1}\Bigl|\log|P_n^*(\cdot,0)|
-(d-1)\nu(n)\frac{g_{I_{c_0}}}{d}\Bigr|\omega
\le t_n^*+(d-1)C_0^*
\label{eq:superattprimitivebetter}
\end{gather}
than \eqref{eq:superattprimitiverough}. 
In particular, by Green's theorem, for every $\phi\in C^2(\bP^1)$ and every $n>1$,
we have
\begin{gather}
 \left|\int_{\bP^1}\phi\rd\left(\Per_f^*(n,0)-\nu(n)\cdot T_f\right)\right|
 \le\biggl(\sup_{\bP^1}\biggl|\frac{\rd\rd^c\phi}{\omega}\biggr|\biggr)
 \cdot \bigl(t_n^*+(d-1)C_0^*\bigr),\tag{\ref{eq:superattprimitive}$'$}
\label{eq:superattprimitiveconcrete}
\end{gather} 
which implies \eqref{eq:superattprimitive}.

\begin{proof}[Proof of \eqref{eq:superattprimitivebetter}]
 For every $n\in\bN$, by \eqref{eq:factorization} and \eqref{eq:lowerconcrete}, we have
 \begin{gather}
 \sup_{B_f}\bigl|\log|P_n^*(\cdot,0)|\bigr|\le t_n^*,\tag{\ref{eq:lowerconcrete}$'$}\label{eq:lowerprimitive}
 \end{gather}
 which is a counterpart to \eqref{eq:lowerconcrete}.
 Fix $n>1$. By \eqref{eq:lowerprimitive},
 $\inf_{\lambda\in B_f}|P_n^*(\lambda,0)|\ge e^{-t_n^*}$. 
 As in the proof of Claim \ref{th:small} in Section {}\ref{sec:Selberg},
 let $\mathcal{F}^*$ be the family of all 
 components of $(P_n^*(\cdot,0))^{-1}(\bD(e^{-t_n^*}))$. 
 By Lemma \ref{th:preparatory}
 and the description of $H_f$ in Subsection \ref{sec:DH},
 every $V\in\mathcal{F}^*$ is a piecewise real analytic
 Jordan domain in $H_f\setminus(I_{c_0}\cup H_1)$ now, and
 for every $V\in\mathcal{F}^*$, 
 the restriction $P_n^*(\cdot,0)|V:V\to\bD(t_n^*)$ is a proper holomorphic
 mapping of degree $d-1$ now and $\#(((P_n^*(\cdot,0))^{-1}(0))\cap V)=1$. 
 For every $V\in\mathcal{F}^*$, letting $\lambda_V$ be the unique point in
 $((P_n^*(\cdot,0))^{-1}(0))\cap V$, by Myrberg's theorem \cite{Myrberg33},
 we now have  
\begin{gather*}
 \log\frac{e^{-t_n^*}}{|P_n^*(\cdot,0)|}
 =G_{\bD(e^{-t_n^*})}(P_n^*(\cdot,0),0)
 =(d-1)\cdot G_V(\cdot,\lambda_V) \quad\text{on }V.
\end{gather*} 
Recalling $t_n^*\ge 0$,
 by a computation similar to that in the proof of Claim \ref{th:small} in
 Section \ref{sec:Selberg},
we have
 \begin{multline*}
 \int_{(P_n^*(\cdot,w))^{-1}(\bD(e^{-t_n^*}))}\Bigl|\log|P_n^*(\cdot,0)|
 -\nu(n)(d-1)\frac{g_{I_{c_0}}}{d}\Bigr|\omega\\
 \le\omega((P_n^*(\cdot,0))^{-1}(\bD(e^{-t_n^*})))t_n^*+(d-1)C_0^*.
 \end{multline*}
 Moreover, by the same argument as that in the proof of Claim \ref{th:harmonic}
 in Section \ref{sec:Selberg},
 we also have
$\sup_{\bC\setminus (P_n^*(\cdot,0))^{-1}(\bD(e^{-t_n^*}))}\bigl|\log|P_n^*(\cdot,0)|-\nu(n)(d-1)d^{-1}g_{I_{c_0}}\bigr|\le t_n^*$. 
Hence \eqref{eq:superattprimitivebetter} holds.
\end{proof}

\subsection{Proof of \eqref{eq:nonrepaveraged}}
As an application of \eqref{eq:superattprimitivebetter}, we also point out 
the following $L^1(\omega)$ estimate
\begin{gather}
\int_{\bP^1}\biggl|\int_0^{2\pi}
\log|P_n^*(\lambda,re^{i\theta})|\frac{\rd\theta}{2\pi}
-\nu(n)(d-1)\frac{g_{I_{c_0}}}{d}\biggr|\omega(\lambda)
\le t_n^*+2(d-1)C_0^*\tag{\ref{eq:superattprimitivebetter}$'$}
\label{eq:averaged}
\end{gather}
for every $n>1$ and every $r\in(0,1]$
(cf.\ \cite[2.\ in Theorem 3.1]{BassanelliBerteloot11}). 
In particular, by Green's theorem, for every $\phi\in C^2(\bP^1)$, every $n>1$, and every $r\in(0,1]$, we will have
\begin{multline}
  \left|\int_{\bP^1}\phi\rd\left(\int_0^{2\pi}\Per_f^*(n,re^{i\theta})\frac{\rd\theta}{2\pi}-\nu(n)\cdot T_f\right)\right|\\
 \le\biggl(\sup_{\bP^1}\biggl|\frac{\rd\rd^c\phi}{\omega}\biggr|\biggr)
 \cdot(t_n^*+2(d-1)C_0^*),\tag{\ref{eq:nonrepaveraged}$'$}
\end{multline}
which implies \eqref{eq:nonrepaveraged}.

\begin{proof}[Proof of \eqref{eq:averaged}]
 For every $n\in\bN$ and every $\lambda\in\bC\setminus(H_f\setminus I_{c_0})$,
 we have $\inf_{z\in\Fix_f^{**}(\lambda,n)}|(f_{\lambda}^n)'(z)|\ge 1$.
 Recall the description of components of $H_f\setminus I_{c_0}$ 
 in Subsection \ref{sec:DH}. For every $n\in\bN$, 
 letting $H_n^*$ be the union of all components $U$ of 
 $H_f\setminus I_{c_0}$ such that $n_U=n$ (so e.g.\ $H_1^*=H_1$),
 there is a holomorphic function $\lambda\mapsto z_\lambda$ 
 on $H_n^*$ such that for every $\lambda\in H_n^*$,
 $z_\lambda\in\Fix_f^{**}(\lambda,n)$ and that 
$(f_{\lambda}^n)'(z_\lambda)\equiv\phi_U(\lambda)$ 
on each component $U$ of $H_n^*$.
 Fix $n>1$ and $r\in(0,1]$, and set
$H_n^*(r):=\{\lambda\in H_n^*:\,(f_{\lambda}^n)'(z_\lambda)\in\bD(r)\}
=\bigcup_{U:\text{ a component of }H_n^*}\phi_U^{-1}(\bD(r))$.
For every $\lambda\in\bC$,
by the definitions of $P_{f,n}^*$ and $p_{f,n}^*$, 
we have
 \begin{multline*}
 \int_0^{2\pi}\log|P_n^*(\lambda,re^{i\theta})|\frac{\rd\theta}{2\pi}
 =\frac{1}{n}\sum_{z\in\Fix_f^{**}(\lambda,n)}\log\max\{r,|(f_{\lambda}^n)'(z)|\}
  -\nu(n)\log d\\
 =\log|P_n^*(\lambda,0)|
 +\begin{cases}  
\displaystyle   \frac{1}{n}\sum_{j=0}^{n-1}\log\frac{r}{|(f_{\lambda}^n)'(f_\lambda^j(z_\lambda))|} & \text{if }\lambda\in H_n^*(r),\\
   0 & \text{if }\lambda\in\bC\setminus H_n^*(r),
  \end{cases}
 \end{multline*}
 which with \eqref{eq:superattprimitivebetter} and the chain rule yields
 \begin{multline*}
\int_{\bP^1}\biggl|\int_0^{2\pi}
\log|P_n^*(\lambda,re^{i\theta})|\frac{\rd\theta}{2\pi}
-\nu(n)(d-1)\frac{g_{I_{c_0}}}{d}\biggr|\omega(\lambda)\\
 \le \bigl(t_n^*+(d-1)C_0^*\bigr)+\int_{H_n^*(r)}
 \log\frac{r}{|(f_{\lambda}^n)'(z_\lambda)|}\omega(\lambda).
 \end{multline*}
 For every component $V$ of $H_n^*(r)$, 
 letting $U$ be the component of $H_n^*(=H_n^*(1))$ containing $V$, 
 the restriction $\phi_U|V:V\to\bD(r)$
 is a proper holomorphic mapping of degree $d-1$,
 so letting $\lambda_V$ be the unique point in $V\cap\phi_U^{-1}(0)$, 
 by Myrberg's theorem \cite{Myrberg33}, 
 we have 
\begin{gather*}
 \log\frac{r}{|(f_{\lambda}^n)'(z_\lambda)|}
 =G_{\bD(r)}((\phi_U|V)(\lambda),0)=(d-1)\cdot G_V(\lambda,\lambda_V) 
\quad\text{on }V.
\end{gather*}
Noting that $H_n^*\subset H_f\setminus(I_{c_0}\cup H_1)$, 
 by a computation similar to that in the proof of Claim \ref{th:small} in
 Section \ref{sec:Selberg}, we have
\begin{gather*}
  \int_{H_n^*(r)}\log\frac{r}{|(f_{\lambda}^n)'(z_\lambda)|}\omega(\lambda)
 \le(d-1)\cdot C_0^*.
\end{gather*}
Hence \eqref{eq:averaged} holds. 
\end{proof}

\begin{acknowledgement}
 The author thanks Professors Thomas Gauthier and Gabriel Vigny for 
 many discussions, and the referee for a very careful scrutiny
 and invaluable comments.
 This work was completed during the author's visit 
 to Laboratoire Ami\'enois de Math\'ematique Fondamentale et Appliqu\'ee, 
 Universit\'e de Picardie Jules Verne in Spring 2016, and the author thanks for 
 the hospitality and the financial support there.
 This research was partially supported by JSPS Grant-in-Aid for Young Scientists (B), 24740087.
\end{acknowledgement}

%\bibliographystyle{jipsj}  
%\bibliography{papers,books} 

\begin{thebibliography}{10}

\bibitem{Apostol}
{\sc Apostol,~T.~M.} {\em Introduction to analytic number theory}, Springer
  (1976).

\bibitem{BassanelliBerteloot09}
{\sc Bassanelli,~G.{\rm\ and }Berteloot,~F.} Lyapunov exponents, bifurcation
  currents and laminations in bifurcation loci, {\em Mathematische Annalen},
  {\bf 345}, 1 (2009), 1--23.

\bibitem{BassanelliBerteloot11}
{\sc Bassanelli,~G.{\rm\ and }Berteloot,~F.} Distribution of polynomials with
  cycles of a given multiplier, {\em Nagoya Math. J.}, {\bf 201} (2011),
  23--43.

\bibitem{BertelootCIME}
{\sc Berteloot,~F.} Bifurcation currents in holomorphic families of rational
  maps, Pluripotential Theory, Springer (2013),  1--93.

\bibitem{Brolin}
{\sc Brolin,~H.} Invariant sets under iteration of rational functions, {\em
  Ark. Mat.}, {\bf 6} (1965), 103--144.

\bibitem{Buff03}
{\sc Buff,~X.} On the {B}ieberbach conjecture and holomorphic dynamics, {\em
  Proc. Amer. Math. Soc.}, {\bf 131}, 3 (2003), 755--759 (electronic).

\bibitem{BG14}
{\sc Buff,~X.{\rm\ and }Gauthier,~T.} Quadratic polynomials, multipliers and
  equidistribution, {\em Proc. Amer. Math. Soc.}, {\bf 143}, 7 (2015),
  3011--3017.

\bibitem{DeMarco01}
{\sc DeMarco,~L.} Dynamics of rational maps: a current on the bifurcation
  locus, {\em Math. Res. Lett.}, {\bf 8}, 1-2 (2001), 57--66.

\bibitem{DeMarco03}
{\sc DeMarco,~L.} Dynamics of rational maps: {L}yapunov exponents,
  bifurcations, and capacity, {\em Math. Ann.}, {\bf 326}, 1 (2003), 43--73.

\bibitem{DouadyHubbard85}
{\sc Douady,~A.{\rm\ and }Hubbard,~J.} Etudes dynamiques des polyn{\^o}mes
  complexes, avec la collaboration de P. Lavaurs, Tan Lei, P. Sentenac. Parts I
  and II, Publications Math{\'e}matiques d'Orsay (1985).

\bibitem{DH82}
{\sc Douady,~A.{\rm\ and }Hubbard,~J.~H.} It\'eration des polyn\^omes
  quadratiques complexes, {\em C. R. Acad. Sci. Paris S\'er. I Math.}, {\bf
  294}, 3 (1982), 123--126.

\bibitem{DOproximity}
{\sc Drasin,~D.{\rm\ and }Okuyama,~Y.} Equidistribution and {N}evanlinna
  theory, {\em Bull. Lond. Math. Soc.}, {\bf 39}, 4 (2007), 603--613.

\bibitem{DujardinFavre08}
{\sc Dujardin,~R.{\rm\ and }Favre,~C.} Distribution of rational maps with a
  preperiodic critical point, {\em Amer. J. Math.}, {\bf 130}, 4 (2008),
  979--1032.

\bibitem{GV15}
{\sc Gauthier,~T.{\rm\ and }Vigny,~G.} Distribution of postcritically finite
  polynomials {II}: Speed of convergence, {\em Journal of Modern Dynamics},
  {\bf 11}, 03 (2017), 57--98.

\bibitem{Levin90}
{\sc Levin,~G.~M.} Theory of iterations of polynomial families in the complex
  plane, {\em Journal of Mathematical Sciences}, {\bf 52}, 6 (1990),
  3512--3522.

\bibitem{Lyubich83stability}
{\sc Lyubich,~M.~Y.} Some typical properties of the dynamics of rational maps,
  {\em Russian Mathematical Surveys}, {\bf 38}, 5 (1983), 154--155.

\bibitem{MSS}
{\sc Ma{\~n}\'e,~R., Sad,~P.{\rm\ and }Sullivan,~D.} On the dynamics of
  rational maps, {\em Ann. Sc. E.N.S.,4$\grave{e}$me s$\acute{e}$rie}, {\bf 16}
  (1983), 193--217.

\bibitem{Manning84}
{\sc Manning,~A.} The dimension of the maximal measure for a polynomial map,
  {\em Ann. of Math. (2)}, {\bf 119}, 2 (1984), 425--430.

\bibitem{MS94}
{\sc McMullen,~C.~T.{\rm\ and }Sullivan,~D.~P.} Quasiconformal homeomorphisms
  and dynamics. {III}. {T}he {T}eichm\"uller space of a holomorphic dynamical
  system, {\em Adv. Math.}, {\bf 135}, 2 (1998), 351--395.

\bibitem{Milnor3rd}
{\sc Milnor,~J.} {\em Dynamics in one complex variable}, Vol. 160 of {\em
  Annals of Mathematics Studies}, Princeton University Press, Princeton, NJ,
  third edition (2006).

\bibitem{MortonVivaldi95}
{\sc Morton,~P.{\rm\ and }Vivaldi,~F.} Bifurcations and discriminants for
  polynomial maps, {\em Nonlinearity}, {\bf 8}, 4 (1995), 571.

\bibitem{Myrberg33}
{\sc Myrberg,~P.~J.} \"{U}ber die {E}xistenz der {G}reenschen {F}unktionen auf
  einer gegebenen {R}iemannschen {F}l\"ache, {\em Acta Math.}, {\bf 61} (1933),
  39--79.

\bibitem{Nevan70}
{\sc Nevanlinna,~R.} {\em Analytic functions}, Translated from the second
  German edition by Phillip Emig. Die Grundlehren der mathematischen
  Wissenschaften, Band 162, Springer-Verlag, New York (1970).

\bibitem{Przytycki85}
{\sc Przytycki,~F.} Hausdorff dimension of harmonic measure on the boundary of
  an attractive basin for a holomorphic map, {\em Invent. Math.}, {\bf 80}, 1
  (1985), 161--179.

\bibitem{Przytycki93}
{\sc Przytycki,~F.} Lyapunov characteristic exponents are nonnegative, {\em
  Proc. Amer. Math. Soc.}, {\bf 119}, 1 (1993), 309--317.

\bibitem{Selberg44}
{\sc Selberg,~H.~L.} Eine {U}ngleichung der {P}otentialtheorie und ihre
  {A}nwendung in der {T}heorie der meromorphen {F}unktionen, {\em Comment.
  Math. Helv.}, {\bf 18} (1946), 309--326.

\bibitem{SibonyUCLA}
{\sc Sibony,~N.} {\em an unpublished UCLA Lecture notes} (1984).

\bibitem{SilvermanDynamics}
{\sc Silverman,~J.~H.} {\em The arithmetic of dynamical systems}, Vol. 241 of
  {\em Graduate Texts in Mathematics}, Springer, New York (2007).

\bibitem{Tsuji59}
{\sc Tsuji,~M.} {\em Potential theory in modern function theory}, Chelsea
  Publishing Co., New York (1975), Reprinting of the 1959 original.

\bibitem{Weitsman72}
{\sc Weitsman,~A.} A theorem on {N}evanlinna deficiencies, {\em Acta Math.},
  {\bf 128}, 1-2 (1972), 41--52.

\end{thebibliography}

\def\cprime{$'$}

\end{document}